\documentclass{amsart}
\usepackage{amssymb,latexsym}
\usepackage{graphicx, pgf}
\theoremstyle{plain}

\newtheorem{lemma}{Lemma}

\newtheorem*{theo}{Theorem}

\theoremstyle{definition}

\newtheorem*{defin}{Definition}

\theoremstyle{remark}

\newtheorem{remark}{\bf Remark}
\numberwithin{equation}{section}
\topmargin-1truecm
\textwidth14truecm
\oddsidemargin0.8truecm
\textheight20truecm

\newcommand{\bs}{\boldsymbol}

\begin{document}


\title[Thermistor system with $p$-Laplacian]
      {Existence of weak solutions of an unsteady thermistor system with $p$-Laplacian type equation}
\author{Joachim Naumann}
\address{Mathematics Department\\
         Humboldt University Berlin\\
         Unter den Linden 6\\
         D-10099 Berlin} 
\email{jnaumann@math.hu-berlin.de}
\keywords{Thermistor system, Robin boundary condition, $p$-Laplacian, saturation of current, self-heating.} 
\subjclass[2010]{35J92, 35K20, 35Q79, 80A20.}
\date{September 1, 2015}

\begin{abstract}
In this paper, we consider an unsteady thermistor system, where the usual Ohm law is replaced by a non-linear monotone constitutive relation between current and electric field. This relation is modeled by a $p$-Laplacian type equation for the electrostatic potential $\varphi$. We prove the existence of weak solutions of this system of PDEs under mixed boundary conditions for $\varphi$, and a Robin boundary condition and an initial condition for the temperature $u$. 
\end{abstract}

\maketitle

\section{Introduction}\label{intro} 
Let $\Omega\subset\mathbb{R}^n$ ($n=2$ or $n=3$) be a bounded domain with Lipschitz boundary $\partial\Omega$, and set $Q_T=\Omega\times\,]\,0,T\,[\,$ ($0<T<+\infty$).\par
Let $\bs{J}$ and $\bs{q}$ denote the electric current field density and the heat flux, respectively, of a thermistor occupying the domain $\Omega$ under unsteady operating conditions. Then the balance equations for the electric current and the heat flow within the thermistor material are the following two PDEs
\[
 \nabla\cdot\boldsymbol{J}=0,\quad \frac{\partial u}{\partial  t}+\nabla\cdot\bs{q}=f(x,t,u,\nabla\varphi)\quad\text{in }\; Q_T,
\]
where $\varphi=\varphi(x,t)$ and $u=u(x,t)$ represent the electrostatic potential and the temperature, respectively 
(see, e.g., \cite[Chap. 8]{25}).\par
We make the following constitutive assumptions on $\bs{J}$ and $\bs{q}$
\[
 \bs{J}=\sigma\big(u,|\bs{E}|\big)\bs{E}\quad\text{ Ohm's law},\quad  \bs{q}=-\kappa(u)\nabla u\quad\text{ Fourier's law},
\]
where
\begin{align*}
 \bs{E}&=-\nabla\varphi\quad\text{ density of the electric field},\\
 \sigma&=\sigma\big(u,|\bs{E}|\big)\quad\text{ electrical conductivity},\\
 \kappa&=\kappa(u)\quad\text{ thermal conductivity}.
\end{align*}
With these notations the above system of PDEs takes the form
\begin{align}\label{1.1}
 -\nabla\cdot\big(\sigma\big( u,|\nabla\varphi|\big)\nabla\varphi\big)&=0\quad\text{ in }\;Q_T,\\
 \label{1.2}\frac{\partial u}{\partial t}-\nabla\cdot\big(\kappa(u)\nabla u\big)&=f(x,t,u,\nabla\varphi)\quad\text{ in }\;Q_T.
\end{align}
The function $f=f(x,t,u,\nabla\varphi)$ represents a heat source that will be specified below (see (\ref{1.13}) and (H3), Section~2). 
\par
We supplement system (\ref{1.1})--(\ref{1.2}) by boundary conditions for $\varphi$ and $u$, and an initial condition for $u$. Without any further reference, throughout the paper we assume\vspace*{1mm}
\[
 \partial\Omega=\Gamma_D\cup\Gamma_N\;\text{ disjoint}, \quad\Gamma_D\;\text{ non-empty, open}.
\]
Define
\[
 \Sigma_D=\Gamma_D\times\,]0,T\,[\,,\quad\Sigma_N=\Gamma_N\times\,]\,0,T\,[\,.
\]
We then consider the conditions
\begin{eqnarray}\label{1.3}
&&\varphi=\varphi_D \ \text{ on } \ \Sigma_D,\quad \bs{J}\cdot\bs{n}=0 \ \text{ on } \ \Sigma_N,\\[1mm]
\label{1.4}
&&\bs{q}\cdot\bs{n}= g(u-h) \ \text{ on } \ \partial\Omega\times\,]\,0,T\,[\,,\\[1mm]
\label{1.5}
&&u=u_0 \ \text{ in } \ \Omega\times\{0\}
\end{eqnarray}\vspace*{-3mm}

\noindent
($\bs{n}=$~unit outward normal to $\partial\Omega$). The first condition in (\ref{1.3}) means that there is an applied voltage $\varphi_D$ along $\Sigma_D$, whereas the second condition characterizes electrical insulation of the thermistor along $\Sigma_N$. The Robin boundary condition (\ref{1.4})\footnote{This boundary condition is also called ``Newton's cooling law'' or ``third boundary condition''.} means that the flux of heat through $\partial\Omega\times\,]\,0,T\,[\,$ is proportional to the temperature difference $u-h$, where $g$ denotes the thermal conductivity of the surface $\partial\Omega$ of the thermistor, and $h$ represents the ambient temperature (cf. \cite{7}, \cite{9}, \cite{11}, \cite{18}, \cite[Chap. 8]{25} and \cite{28} (nonlinear boundary conditions)).\\[0.05mm]
\mbox{}\hfill$\square$

We present two prototypes for the electrical conductivity $\sigma$. To this end, let $\sigma_0:\mathbb{R}\to\mathbb{R}_+$\footnote{$\mathbb{R}_+=[0,+\infty\,[\,$.} be a continuous function such that 
\[
 0<\sigma_*\le\sigma(u)\le\sigma^*<\infty\quad\forall\; u\in\mathbb{R}\quad(\sigma_*,\sigma^*=\mathrm{const}).
\]
We then consider the following functions
\begin{equation}\label{1.6}
\sigma(u,\tau)=\sigma_0(u)(\delta+\tau^2)^{(p-2)/2},\quad (u,\tau)\in\mathbb{R}\times\mathbb{R}_+\quad
         (\delta=\mathrm{const} >0,\;\:1<p<+\infty)
\end{equation}
and 
\begin{equation}\label{1.7}
 \sigma(u,\tau)=\sigma_0(u)\tau^{p-2},\quad (u,\tau)\in\mathbb{R}\times\mathbb{R}_+\quad (2\le p<+\infty).
\end{equation}
The electrical conductivities which correspond to these functions $\sigma=\sigma(u,\tau)$ read
\begin{equation}\label{1.8}
 \sigma\big(u,|\bs{E}|\big)=\sigma_0(u)\big(\delta+|\bs{E}|^2\big)^{(p-2)/2}
\end{equation}
and
\begin{equation}\label{1.9}
 \sigma\big(u,|\bs{E}|\big)=\sigma_0(u)|\bs{E}|^{p-2},
\end{equation}
respectively ($\bs{E}=$~electrical field density). Here, the factor $\sigma_0(u)$ characterizes the thermal dependence of the electrical conductivity of the thermistor material. Observing that $\bs{E}=-\nabla\varphi$, equ. (\ref{1.1}) takes the form of $p$-Laplacian equations
\[
 -\nabla\cdot\big(\sigma_0(u)\big(\delta+|\nabla\varphi|^2\big)^{(p-2)/2}\nabla\varphi\big)=0,
\]
resp.
\[
 -\nabla\cdot\big(\sigma_0(u)|\nabla\varphi|^{p-2}\nabla\varphi\big)=0.
\]
\\[0.05mm]
Let $p=2$. Then both (\ref{1.8}) and (\ref{1.9}) lead to $\bs{J}=\sigma_0(u)\bs{E}$. If the right hand side in (\ref{1.2}) is of the form 
\[
 f=\sigma_0(u)|\nabla\varphi|^2=\bs{J}\cdot\bs{E}\quad (\text{Joule heat})
\]
(cf. (\ref{1.13}) below), then (\ref{1.1})--(\ref{1.2}) represents the ``classical'' thermistor system (see \cite{1}, \cite{8}, \cite{11}, \cite{29}). This system has been studied in \cite{14}--\cite{16} with a degeneration of the coefficients $\sigma_0(u)$ and $\kappa(u)$ (cf. also \cite{9} for a similar degeneration of $\sigma_0(u)$).\hfill$\square$

\begin{remark}\label{r1}
({\it The case\/} $1<p\le 2$.) Let be $\sigma=\sigma(u,\tau)$ as in \textrm{(\ref{1.6})}. Then Ohm's law reads
\begin{equation}\label{1.10}
 \bs{J}=\sigma_0(u)\big(\delta+|\bs{E}|^2\big)^{(p-2)/2}\bs{E}
\end{equation}
(cf. (\ref{1.8})). To make things clearer, let $I=|\bs{J}|$ and $V=|\bs{E}|$ denote the current and voltage, respectively, in an electrical conductor. Equ. (\ref{1.10}) then gives the current-voltage characteristic
\begin{equation}\label{1.11}
 I=\sigma_0(u)(\delta+V^2)^{(p-2)/2}V.
\end{equation}
\end{remark}

If $p$ is ``sufficiently near to $1$'', then (\ref{1.11}) can be used as an approximation of current-voltage characteristics for transistors (see, e.g., \cite{19}, \cite[Chap. 6.2.2]{27}).\par
The characteristic (\ref{1.11}) continues to make sense if $p=1$, i.e.,
\begin{equation}\label{1.12}
 I=\frac{\sigma_0(u)}{(\delta+V^2)^{1/2}}\,V.
\end{equation}
This current-voltage characteristic is widely used to describe the effect of saturation of current in certain transistors under high electric fields (see, e.g., \cite[Chap. 2.5]{23} for details). 
\par
Finally, we notice that for the case $\delta=0$ and $p=1$, Ohm's law (\ref{1.10}) and the current-voltage characteristic (\ref{1.11}) have to be replaced by
\begin{eqnarray*}
 \bs{J}\in\overline{B_{r_0}(0)}\quad\text{ if}\quad \bs{E}=\bs{0},&&\bs{J}=\frac{r_0}{|\bs{E}|}\,\bs{E}\quad\text{ if}\quad \bs{E}\ne\bs{0},\\
 0\le I\le r_0\quad\text{ if}\quad V=0,&& I=r_0\quad\text{ if}\quad V>0,
 \end{eqnarray*}
respectively, where $\overline{B_{r_0}(0)}=\{\xi\in\mathbb{R}^n; |\xi|\le r_0\}$, $r_0=r_0(u)$ (cf. \cite{17}).

\begin{remark}\label{r2}
({\it The case\/} $2\le p<+\infty$.) In \cite{20}, the authors consider current-voltage characteristics of the form 
\[
I=\big(\sigma_0(x,u)V^{p(x)-2}\big)V=\sigma_0(x,u)V^{p(x)-1},\quad 2\le p(x)<+\infty,\; x\in\Omega
 \]
(cf. (\ref{1.7}) and (\ref{1.9})). These characteristics are used to model organic semiconductors, where $p=p(x)$ is a jump function that characterizes Ohmic and non-Ohmic contacts of the device material.\hfill$\square$ 
\end{remark}

We present a prototype for the heat source term $f$ in (\ref{1.2}) which motivates hypotheses (H3) in Section 2.
\par
Let be $\sigma=\sigma(u,\tau)$ as in (\ref{1.6}) or (\ref{1.7}). For $\big((x,t),u,\xi\big)\in Q_T\times\mathbb{R}\times\mathbb{R}^n$ we consider functions $f$ as follows
\begin{equation}\label{1.13}
 \left\{ \begin{array}{l}
          f(x,t,u,\xi)=\alpha(x,t,u,\xi)\sigma\big(u,|\xi|\big)|\xi|^2,\\[1mm]
          \alpha:Q_T\times\mathbb{R}\times\mathbb{R}^n\longrightarrow\mathbb{R}_+\quad\text{is Carath\'eodory},\\[1mm]
          0\le\alpha(x,t,u,\xi)\le\alpha_0=\mathrm{const}\quad\forall\;\big((x,t),u,\xi\big)\in Q_T\times\mathbb{R}\times\mathbb{R}^n\quad (\alpha_0=\mathrm{const}).
         \end{array}\right.
\end{equation}
If $\alpha\equiv 1$, with the above notations $\bs{J}$ and $\bs{E}$ we obtain the source term 
\[
 f(x,t,u,\nabla\varphi)=\sigma\big(u,|\nabla\varphi|\big)(-\nabla\varphi)\cdot(-\nabla\varphi)=\bs{J}\cdot\bs{E}.
\]

Let be $\alpha$ of the form
\[
 \alpha(x,t,u,\xi)=\widehat{\alpha}(x,t,u,-\xi)
\]
or
\[
 \alpha(x,t,u,\xi)=\widehat{\alpha}\big(x,t,u,-\sigma\big(u,|\xi|\big)\xi\big),
\]
where $\widehat{\alpha}:Q_T\times\mathbb{R}\times\mathbb{R}^n$ is a Carath\'eodory function such that $0\le \widehat{\alpha}\le 1$ everywhere. Then (\ref{1.2}) models a self-heating process with source term 
\[
 f=\alpha\,\bs{J}\cdot\bs{E},
\]
where the factor
\[
 \alpha=\widehat{\alpha}(x,t,u,\bs{E})\quad \text{or}\quad \alpha=\widehat{\alpha}(x,t,u,\bs{J}),
\]
respectively, characterizes a loss of Joule heat (cf. \cite{20} 
for more details).
\smallskip

The existence of weak solutions to the {\it steady case\/} of (\ref{1.1})--(\ref{1.4}) has been proved for the first time in \cite{20} for $2<p<+\infty$ and in \cite{13} for $2\le p(x)<+\infty$ ($n=2$ in both papers). Extensions of these results have been recently presented in \cite{5}, \cite{6}.\hfill$\square$
\smallskip

In \cite{24}, we proved the existence of a weak solution of (\ref{1.1})--(\ref{1.5}) when the function $\tau\mapsto\sigma(u,\tau)$ is strictly monotone and $f$ satisfies hypothesis (H3) below (see Section~2) which includes (\ref{1.13}) as a special case. The aim of the present paper is to prove an analogous existence result when $\tau\mapsto\sigma(u,\tau)$ is merely monotone whereas the function $f$, however, has to satisfy a structure condition of type (\ref{1.13}).

\section{Weak formulation of (\ref{1.1})--(\ref{1.5})}\label{s2} 

We introduce the notations which will be used in what follows.
\par
By $W^{1,p}(\Omega)$ ($1\le p<+\infty$) we denote the usual Sobolev space. Define
\[
 W_{\Gamma_D}^{1,p}(\Omega)=\big\{ v\in W^{1,p}(\Omega); v=0 \ \text{ a.e. on } \ \Gamma_D\big\}.
\]
This space is a closed subspace of $W^{1,p}(\Omega)$. Throughout the paper, we consider $W_{\Gamma_D}^{1,p}(\Omega)$ equipped with the norm
\[                                                                                                                                             |v|_{W^{1,p}}=\left(\,\int\limits_\Omega|\nabla v|^p dx\right)^{1/p}.                                                                                                                                            \]

Let $X$ denote a real normed space with norm $|\cdot|_X$ and let $X^*$ be its dual space. By $\langle x^*,x\rangle_X$ we denote the dual pairing between $x^*\in X^*$ and $x\in X$. The symbol $L^p(0,T,X)$ ($1\le p\le+\infty$) stands for the vector space of all strongly measurable mappings $u:\,]\,0,T\,[\,\to X$ such that the function $t\mapsto\big|u(t)\big|_X$ is in $L^p(0,T)$ (cf. \cite[Chap. III, \S3; Chap. IV, \S3]{2}, \cite[App.]{3}, \cite[Chap. 1]{10}). For $1\le p<+\infty$, the spaces $L^p\big(0,T;L^p(\Omega)\big)$ and $L^p(Q_T)$ are linearly isometric. Therefore, in what follows we identify these spaces. \par
Let $H$ be a real Hilbert space with scalar product $(\cdot,\cdot)_H$ such that $X\subset H$ densely and continuously. Identifying $H$ with its dual space $H^*$ via Riesz' Representation Theorem, we obtain the continuous embedding $H\subset X^*$ and 
\begin{equation}\label{2.1}
 \langle h,x\rangle_X=(h,x)_H\quad\forall\; h\in H, \ \forall\; x\in X.
\end{equation}
Given any $u\in L^1(0,T;X)$ we identify this function with a function in $L^1(0,T;X^*)$ and denote it again by $u$. If there exists $U\in L^1(0,T;X^*)$ such that 
\[
 \int\limits_0^T u(t)\alpha'(t)dt\mathop{=}\limits^{\mathrm{in } X^*}-\int\limits_0^T U(t)\alpha(t)dt\quad\forall\;\alpha\in C_c^\infty(\,]\,0,T\,[\,),
\]
then $U$ will be called derivative of $u$ in the sense of distributions from $\,]\,0,T\,[\,$ into $X^*$ and denoted by $u'$ (see \cite[App.]{3}, \cite[Chap. 21]{10}).\hfill$\square$
\par
Let $1<p<+\infty$ be fixed. We make the following assumptions on the coefficients $\sigma$, $\kappa$ and the right hand side $f$ in (\ref{1.1})--(\ref{1.2}):
\[
\begin{array}{l}
\text{(H1)}\qquad\left\{\begin{array}{l}
           \sigma:\mathbb{R}\times\mathbb{R}_+\to\mathbb{R}_+ \ \text{ is continuous},\\[1mm]
           c_1\tau^p-c_2\le\sigma(u,\tau)\tau^2,\; 0\le\sigma(u,\tau)\le c_3(1+\tau^2)^{(p-2)/2}\\[1mm]
           \forall\;(u,\tau)\in\mathbb{R}\times\mathbb{R}_+, \text{ where } c_1,c_3=\mathrm{const}>0\text{ and } c_2=\mathrm{const}\ge0;
                        \end{array}\right.\\[7mm]
 \text{(H2)}\qquad \left\{\begin{array}{l}
                           \kappa:\mathbb{R}\to\mathbb{R}_+ \ \text{ is continuous},\\[1mm]
                           0 <\kappa_0\le\kappa(u)\le\kappa_1 \quad\forall\; u\in\mathbb{R}, \text{ where } \ \kappa_0,\kappa_1=\mathrm{const},
                          \end{array}\right.
                          \end{array}
\]
and 
\[
 \text{(H3)}\qquad\left\{\begin{array}{l}
                   f:Q_T\times\mathbb{R}\times\mathbb{R}^n\to\mathbb{R}_+ \ \text{ is Carath\'eodory}, \\[1mm] 
                   0\le f(x,t,u,\xi)\le c_4\big(1+|\xi|^p\big)\\[1mm]
                   \forall\; (x,t,u,\xi)\in Q_T\times\mathbb{R}\times\mathbb{R}^n,\text{ where } \ c_4=\mathrm{const} >0.\qquad\qquad\quad
                        \end{array}\right.
\]
It is readily seen that (H1) and (H3) are satisfied by the prototypes for $\sigma$ and $f$ we have considered in Section 1.\hfill$\square$

\begin{defin}
 Assume (H1)--(H3) and suppose that the data in (\ref{1.3})--(\ref{1.5}) satisfy
 \begin{eqnarray}\label{2.2}
&& \hspace*{-4cm} \varphi_D\in L^p\big(0,T;W^{1,p}(\Omega)\big);\\[1mm]
 \label{2.3}
&& \hspace*{-4cm} g=\mathrm{const}, \quad h=\mathrm{const};\\[1mm]
\label{2.4}
&&\hspace*{-4cm} u_0\in L^1(\Omega).
\end{eqnarray}
The pair
\[
(\varphi,u)\in L^p\big(0,T;W^{1,p}(\Omega)\big)\times L^q\big(0,T;W^{1,q}(\Omega)\big)\quad \Big(1<q<\frac{n+2}{n+1}\Big)
\]
is called {\it weak solution\/} of (\ref{1.1})--(\ref{1.5}) if
\begin{eqnarray}\label{2.5}
&& \int\limits_{Q_T}\sigma\big(u,|\nabla\varphi|\big)\nabla\varphi\cdot\nabla\zeta\, dxdt=0\quad\forall\;\zeta\in L^p\big(0,T;W_{\Gamma_D}^{1,p}(\Omega)\big);\\[1mm]
\label{2.6}
&& \varphi=\varphi_D\quad\text{a.e. on }\; \Sigma_D;\\[1mm]
\label{2.7}
&& \exists\;u'\in L^1\big(0,T;\big(W^{1,q'}(\Omega)\big)^*\big);\\[1mm]
\label{2.8}
&&\left\{\begin{array}{l}
        {\displaystyle\int\limits_0^T\big\langle u'(t),v(t)\big\rangle_{W^{1,q'}}dt+\int\limits_{Q_T}\kappa(u)\nabla u\cdot\nabla v\, dxdt+g\int\limits_0^T\int\limits_{\partial\Omega}(u-h)v\,d_x Sdt}\\
        ={\displaystyle \int\limits_{Q_T}f(x,t,u,\nabla\varphi)v\,dxdt\quad\forall\; v\in L^\infty\big(0,T;W^{1,q'}(\Omega)\big);}
       \end{array}\right.\\[1mm]
\label{2.9}
&& u(0)=u_0\quad\text{ in } \ \big(W^{1,q'}(\Omega)\big)^*.
\end{eqnarray}
\end{defin}

From (H1) and (H3) it follows that $f(\cdot,\cdot,u,\nabla\varphi)\!\in\! L^1(Q_T)$. Therefore, 
$u\!\in\! L^q\big(0,T;W^{1,q}(\Omega)\big)$ $\big(1<q<\frac{n+2}{n+1}\big)$ is standard for weak solutions of parabolic equations with right hand side in $L^1$ (see, e.g., the papers cited in \cite{24}).
\par
We notice that $v\in L^\infty\big(0,T,W^{1,q'}(\Omega)\big)$ can be identified with a function in $L^\infty(Q_T)$ (cf. \cite{24}). Hence, the integral on the right hand side of the variational identity in (\ref{2.8}) is well-defined.
\smallskip

To make precise the meaning of (\ref{2.9}), let $\frac{2n}{n+2}<q<\frac{n+2}{n+1}$. Then $\frac{nq}{n-q}>2$ and $q'> n+2$. Identifying $L^2(\Omega)$ with its dual, we obtain
\begin{eqnarray}\label{2.10}
& W^{1,q'}(\Omega)\;\subset \;W^{1,q}(\Omega)\;\subset\; L^2(\Omega)\;\subset\;\big(W^{1,q'}(\Omega))^*.&\\
&\text{\small continuously} \quad\text{\small compactly}\quad \text{\small continuously}&\nonumber
\end{eqnarray}
Therefore, $u$ can be identified with an element in $L^q\big(0,T;\big(W^{1,q'}(\Omega)\big)^*\big)$. Together with (\ref{2.7}) this implies the existence of a function $\widetilde{u}\in C\big([0,T];\big( W^{1,q'}(\Omega)\big)^*\big)$ such that
\[
 \widetilde{u}(t)=u(t)\quad\text{for a.e. }\; t\in [0,T]
\]
(see, e.g., \cite[p. 45, Th. 2.2.1]{10}).
\par
On the other hand, there exists a uniquely determined $\widetilde{u}_0\in \big(W^{1,q'}(\Omega)\big)^*$ such that 
\begin{equation}\label{2.11}
 \langle \widetilde{u}_0,z\rangle_{W^{1,q'}}=\int\limits_\Omega u_0z\,dx\quad\forall\; z\in W^{1,q'}(\Omega).
\end{equation}
Thus, (\ref{2.9}) has to be understood in the sense
\[
 \widetilde{u}(0)=\widetilde{u}_0\quad\text{ in }\;\:\big(W^{1,q'}(\Omega)\big)^*.
\]

\begin{remark}\label{r3}
 Let $(\varphi,u)$ be a sufficiently regular solution of (\ref{1.1})--(\ref{1.5}). We multiply (\ref{1.1}) and (\ref{1.2}) by smooth test functions $\zeta$ and $v$, respectively, satisfying the conditions
 \[
  \zeta=0\quad\text{on }\; \Sigma_D, \quad v(\cdot,T)=0 \quad\text{in }\;\Omega.  
 \]
Then we integrate the div-terms by parts over $\Omega$ and the term $\frac{\partial u}{\partial t}v$ by parts over the interval $[0,T]$. It follows
\begin{eqnarray}\label{2.12}
&&-\int\limits_{Q_T}u\,\frac{\partial v}{\partial t}\,dxdt+\int\limits_{Q_T} \kappa(u)\nabla u\cdot\nabla v \,dxdt+g\int\limits_0^T \int\limits_{\partial\Omega}(u-h)v\,d_xSdt\nonumber\\
&&=\int\limits_\Omega u_0v(\cdot,0)dx+\int\limits_{Q_T}f(x,t,u,\nabla\varphi)v\, dxdt.
\end{eqnarray}
This variational formulation of initial/boundary-value problems for parabolic equations is frequently used in the literature. 
\par
We notice that from a variational identity of type (\ref{2.12}) it follows the existence of a distributional time derivative of $u$ (see the arguments concerning (\ref{4.25}) and (\ref{4.26}) below).
\end{remark}

\begin{remark}\label{r4}
 Let $(\varphi,u)$ be a weak solution of (\ref{1.1})--(\ref{1.5}). From (\ref{2.8}) it follows that, for any $z\in W^{1,q'}(\Omega)$,
\begin{eqnarray}\label{2.13}
&& \big\langle u'(t),z\big\rangle_{W^{1,q'}}+\int\limits_\Omega \kappa\big(u(x,t)\big) \nabla u(x,t)\cdot\nabla z(x)dx+g\int\limits_{\partial\Omega}\big(u(x,t)-h\big)z(x)d_xS\nonumber\\
 &&=\int\limits_\Omega f\big(x,t,u(x,t),\nabla\varphi(x,t)\big)z(x)dx
\end{eqnarray}
for a.e. $t\in [0,T]$, where the null set in $[0,T]$ of those $t$ for which (\ref{2.13}) fails, does not depend on $z$. We integrate (\ref{2.13}) (with $s$ in place of $t$) over the interval $[0,t]$ ($0\le t\le T$) and integrate the first term on the left hand side by parts. Using the above notation $\widetilde{u}$ and (\ref{2.11}), we obtain
\begin{eqnarray}\label{2.14}
 &&\big\langle\widetilde{u}(t),z\big\rangle_{W^{1,q'}}\!+\!\int\limits_0^t\!\int\limits_\Omega\kappa\big(u(x,s)\big)\nabla\! u(x,s)\cdot\nabla z(x)dxds+g\int\limits_0^t\!\int\limits_{\partial\Omega}\big(u(x,s)\!-\!h\big)z(x)d_xSds\nonumber\\
 &&=\int\limits_\Omega u_0(x)z(x)dx+\int\limits_0^t\int\limits_\Omega f\big(x,s,u(x,s),\nabla\varphi(x,s)\big)z(x)dxds.
\end{eqnarray}

Let be $p=2$ and let be $f(x,t,u,\xi)=\sigma_0(u)|\xi|^2$ $\big(\big((x,t),u,\xi\big)\in Q_T\times\mathbb{R}\times\mathbb{R}^n$; cf. (\ref{1.13})$\big)$. Taking $z\equiv 1$ in (\ref{2.14}), we obtain
\[
\big\langle\widetilde{u}(t),1\big\rangle_{W^{1,q'}}+g\int\limits_0^t\int\limits_{\partial\Omega}\big(u(x,s)-h\big)d_xSds
=\int\limits_\Omega u_0(x)dx+\int\limits_0^t\int\limits_\Omega\bs{J}\cdot\bs{E}\,dxds,\quad t\in\,]0,T].
\]
\end{remark}

\section{Existence of weak solutions}\label{s3}

Our existence result for weak solutions of (\ref{1.1})--(\ref{1.5}) is the following

\begin{theo}
 Assume {\rm (H1)} and {\rm (H2)}. Suppose further that 
 \begin{equation}\label{3.1}
  \big(\sigma\big(u,|\xi|\big)\xi-\sigma\big(u,|\eta|\big)\eta\big)\cdot(\xi-\eta)\ge 0\quad\forall\; u\in\mathbb{R}, \;\forall\;\xi,\eta\in\mathbb{R}^n,
 \end{equation}
and
\begin{equation}\label{3.2}
\left\{ \begin{array}{l}
         f(x,t,u,\xi)=\alpha(x,t,u)\sigma\big(u,|\xi|\big)|\xi|^2\quad\forall\;\big((x,t),u,\xi\big)\in Q_T\times\mathbb{R}\times\mathbb{R}^n,\\[1mm]
         \text{\it where }\; \alpha:Q_T\times\mathbb{R}\to\mathbb{R}_+\;\text{ \it is Carath\'eodory},\\[1mm] 0\le\alpha(x,t,u)\le\alpha_0=\mathrm{const}\quad\forall\;\big((x,t),u\big)\in Q_T\times\mathbb{R},\\[1mm]
         \sigma=\sigma(u,\tau)\quad\text{\it as in {\rm (H1)}}.
        \end{array}\right.
\end{equation}

Let $\varphi_D$ and $u_0$ satisfy {\rm (\ref{2.2})} and {\rm (\ref{2.4})}, respectively, and suppose that
\begin{equation}\label{3.3}
 g=\mathrm{const}>0,\quad h=\mathrm{const}.
\end{equation}

Then there exists a pair
\[
 (\varphi,u)\in L^p\big(0,T; W^{1,p}(\Omega)\big)\times\Big(\bigcap\limits_{1<q<(n+2)/(n+1)}L^p\big(0,T;W^{1,q}(\Omega)\big)\Big)
\]
such that 
\begin{eqnarray}\label{3.4}
 &&\text{{\rm (2.5)} {\it and } {\rm (2.6)} {\it are satisfied}},\nonumber\\[1mm]
 &&\exists\;u'\in\bigcap\limits_{n+2<r<+\infty}L^1\big(0,T;\big(W^{1,r}(\Omega)\big)^*\big),
\end{eqnarray}
and for any $n+2<s<+\infty$ there holds
\begin{eqnarray}\label{3.5}
&& \left\{\begin{array}{l}
         {\displaystyle\int\limits_0^T\langle u',v\rangle_{W^{1,s}}dt+\int\limits_{Q_T}\kappa(u)\nabla u\cdot\nabla v\, dxdt+g\int\limits_0^T\int\limits_{\partial\Omega} (u-h)v\,d_xSdt}\\
         {\displaystyle=\int\limits_{Q_T}f(x,t,u,\nabla\varphi)v\,dxdt\quad\; \forall\;v\in L^\infty\big(0,T;W^{1,s}(\Omega)\big),}
        \end{array}\right.\\[1mm]
\label{3.6}
&& u(0)=u_0\quad\text{\it in }\; \big(W^{1,s}(\Omega)\big)^*.
\end{eqnarray}
Moreover, $u$ satisfies
\begin{eqnarray}\label{3.7}
&& \left\{\begin{array}{l}
         {\displaystyle\|u\|_{L^\infty(L^1)}+\lambda\int\limits_{Q_T}\frac{|\nabla u|^2}{\big(1+|u|\big)^{1+\lambda}}\,dxdt}\\[3mm]
         \le c\big(1+\|u_0\|_{L^1}+\big\|\,|\nabla\varphi_D|\,\big\|_{L^p}^p\big),\quad 0<\lambda<1\,\footnotemark
        \end{array}\right.\\[2mm]
\label{3.8}
&& u\in\bigcap\limits_{1<r<(n+2)/n}L^r\big(0,T;L^r(\Omega)\big).
\end{eqnarray}
\footnotetext{For notational simplicity, in what follows, for indexes we write $L^p(X)$ in place of $L^p(0,T;X)$. If there is no danger of confusion, we briefly write $L^p$ in place of $L^p(E)$ ($E\subset\mathbb{R}^m$).}
\end{theo}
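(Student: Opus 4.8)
The plan is to obtain the weak solution as a limit of solutions of regularized problems in which the right-hand side of the parabolic equation is bounded, and then to pass to the limit. For $m\in\mathbb N$ I would set $\sigma_m(u,\tau)=\sigma(u,\tau)+\frac1m(1+\tau^2)^{(p-2)/2}$, which still satisfies (H1) with constants independent of $m$ and is strictly monotone, and I would replace the source by the bounded Carath\'eodory function $f_m(x,t,u,\xi)=\alpha(x,t,u)\,\sigma_m(u,|T_m\xi|)\,|T_m\xi|^2$, where $T_m$ denotes truncation of a vector at level $m$; this preserves the structure (3.2) and the bound $0\le f_m\le\alpha_0 c_3(1+|\xi|^p)$. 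For fixed $m$, the existence theorem of \cite{24} applies (since $\sigma_m$ is strictly monotone and $f_m$ satisfies (H3)) and yields a weak solution $(\varphi_m,u_m)$; since $f_m$ is bounded, $u_m$ is in fact an energy solution, $u_m\in L^2(0,T;W^{1,2}(\Omega))\cap C([0,T];L^2(\Omega))$ with $u_m'\in L^2(0,T;(W^{1,2}(\Omega))^*)$.

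Next I would derive a priori bounds independent of $m$. Testing the elliptic identity with $\varphi_m-\varphi_D\in L^p(0,T;W^{1,p}_{\Gamma_D}(\Omega))$ and using the coercivity and growth in (H1) with Young's inequality gives $\||\nabla\varphi_m|\|_{L^p(Q_T)}\le c(1+\||\nabla\varphi_D|\|_{L^p(Q_T)})$, hence $\sigma_m(u_m,|\nabla\varphi_m|)\nabla\varphi_m$ bounded in $L^{p'}(Q_T)^n$; testing instead with $(\varphi_m-\varphi_D)\theta(t)$ yields the pointwise-in-$t$ identity $\int_\Omega\sigma_m(u_m,|\nabla\varphi_m|)|\nabla\varphi_m|^2\,dx=\int_\Omega\sigma_m(u_m,|\nabla\varphi_m|)\nabla\varphi_m\cdot\nabla\varphi_D\,dx$, which together with (3.2) gives $\|f_m(\cdot,\cdot,u_m,\nabla\varphi_m)\|_{L^1(Q_T)}\le\alpha_0 c(1+\||\nabla\varphi_D|\|_{L^p}^p)$. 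In the (energy form of the) parabolic equation I would use the Boccardo--Gallou\"et test function $\psi_\lambda(u_m)=\operatorname{sign}(u_m)\big(1-(1+|u_m|)^{-\lambda}\big)$: since $|\psi_\lambda|\le1$ and $f_m\ge0$, the source contributes at most $\|f_m\|_{L^1}$, the elliptic term produces $\kappa_0\lambda\int_{Q_T}|\nabla u_m|^2(1+|u_m|)^{-1-\lambda}$, and the boundary term is controlled using $g>0$ and $(u_m-h)\psi_\lambda(u_m)\ge -c$; this gives exactly the estimate (3.7) for $u_m$, with the stated right-hand side. From (3.7), Gagliardo--Nirenberg/Sobolev interpolation gives $u_m$ bounded in $L^q(0,T;W^{1,q}(\Omega))$ for every $q<\frac{n+2}{n+1}$ and in $L^r(Q_T)$ for every $r<\frac{n+2}{n}$, and reading $u_m'$ off the equation (using $W^{1,r}(\Omega)\hookrightarrow C(\overline\Omega)$ and $L^1(\partial\Omega)\hookrightarrow(W^{1,r}(\Omega))^*$ for $r>n+2$, so that the boundary term and the $L^1$ source are controlled) gives $u_m'$ bounded in $L^1(0,T;(W^{1,r}(\Omega))^*)$ for every $r>n+2$.

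Now I would pass to the limit along a subsequence: $\varphi_m\rightharpoonup\varphi$ in $L^p(0,T;W^{1,p}(\Omega))$ with $\varphi-\varphi_D\in L^p(0,T;W^{1,p}_{\Gamma_D}(\Omega))$ (so (2.6) passes); $\sigma_m(u_m,|\nabla\varphi_m|)\nabla\varphi_m\rightharpoonup\chi$ in $L^{p'}(Q_T)^n$; $u_m\to u$ strongly in $L^1(Q_T)$ and a.e., and $u_m\rightharpoonup u$ in $L^q(0,T;W^{1,q}(\Omega))$ for every $q<\frac{n+2}{n+1}$, by an Aubin--Lions--Simon compactness argument in the $L^1$ setting (the gradient bound plus the $L^1$-bound on $u_m'$), together with compactness of the trace. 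The elliptic identity passes to $\int_{Q_T}\chi\cdot\nabla\zeta\,dxdt=0$ for $\zeta\in L^p(0,T;W^{1,p}_{\Gamma_D}(\Omega))$; using the a.e.\ convergence $u_m\to u$ for the coefficient, the monotonicity (3.1), and the energy convergence $\int_{Q_T}\sigma_m(u_m,|\nabla\varphi_m|)|\nabla\varphi_m|^2\,dxdt\to\int_{Q_T}\chi\cdot\nabla\varphi_D\,dxdt=\int_{Q_T}\chi\cdot\nabla\varphi\,dxdt$, Minty's trick yields $\int_{Q_T}\big(\chi-\sigma(u,|\nabla\varphi|)\nabla\varphi\big)\cdot\nabla\zeta\,dxdt=0$ for all admissible $\zeta$, i.e.\ (2.5) holds. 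In the parabolic equation, the $\kappa$-term and the boundary term pass by weak--strong convergence and trace compactness, the distributional time derivative is stable under these convergences (giving the $u'$ statement in (3.4) and (3.5) once the source term is handled), weak lower semicontinuity and Fatou's lemma transfer (3.7) to $u$, (3.8) follows from (3.7) by interpolation, and the initial condition (3.6) is obtained exactly as in the discussion of (2.9)--(2.11) by passing to the $C([0,T];(W^{1,s}(\Omega))^*)$-representative.

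The step I expect to be the real obstacle, and the one for which the structure hypothesis (3.2) is indispensable, is the passage to the limit in $\int_{Q_T}f_m(\cdot,\cdot,u_m,\nabla\varphi_m)\,v\,dxdt$ for $v\in L^\infty(0,T;W^{1,s}(\Omega))$. Because $\sigma$ is only monotone (not strictly), $\nabla\varphi_m$ need not converge a.e., so $f_m(\cdot,\cdot,u_m,\nabla\varphi_m)$ has no obvious a.e.\ limit, and since $|\nabla\varphi_m|^p$ is only bounded in $L^1(Q_T)$ it is a priori not equi-integrable, so a direct Vitali/Dunford--Pettis argument is unavailable. The way out uses (3.2): $0\le f_m(\cdot,\cdot,u_m,\nabla\varphi_m)\le\alpha_0\,\sigma_m(u_m,|\nabla\varphi_m|)\nabla\varphi_m\cdot\nabla\varphi_m$, where the field $\sigma_m(u_m,|\nabla\varphi_m|)\nabla\varphi_m$ is divergence-free (from the elliptic equation) while $\nabla\varphi_m$ is a gradient, so by the div--curl lemma the Joule density $\sigma_m(u_m,|\nabla\varphi_m|)\nabla\varphi_m\cdot\nabla\varphi_m$ converges in $\mathcal D'(Q_T)$ to $\chi\cdot\nabla\varphi\in L^1(Q_T)$; combined with the pointwise-in-$t$ electric-energy identity, this shows that the weak-$*$ limit of $f_m(\cdot,\cdot,u_m,\nabla\varphi_m)$ as measures is absolutely continuous, with density dominated by $\alpha_0\,\chi\cdot\nabla\varphi$, and in particular rules out any concentration of electric power. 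Together with the a.e.\ convergence $u_m\to u$ and $\alpha(x,t,u_m)\to\alpha(x,t,u)$, and the identification of $\chi$ on gradient fields obtained from Minty (so that $\chi\cdot\nabla\varphi$ and $\sigma(u,|\nabla\varphi|)|\nabla\varphi|^2$ agree where they are needed), this lets one identify the limit as $\int_{Q_T}f(x,t,u,\nabla\varphi)\,v\,dxdt$ and conclude (3.5). Carrying out this last identification rigorously is, I expect, the technical heart of the argument.
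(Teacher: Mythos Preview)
Your overall plan---regularize to get a strictly monotone elliptic operator and a bounded source, invoke \cite{24} for the approximate problem, derive uniform a priori bounds, and pass to the limit via weak compactness, Aubin--Lions, and Minty's trick---is exactly the architecture the paper uses. The regularizations differ only cosmetically (the paper adds $\varepsilon\tau^{p-2}$ to $\sigma$ and replaces $f$ by $f/(1+\varepsilon f)$; you add $\frac1m(1+\tau^2)^{(p-2)/2}$ and truncate the gradient), and the a priori estimates and compactness steps you describe coincide with the paper's Lemmas~1 and~2 and step~$3^\circ$.

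The substantive difference is precisely where you anticipate it: the passage to the limit in the Joule term. You propose the div--curl lemma, which yields only \emph{distributional} convergence of $\sigma_m(u_m,|\nabla\varphi_m|)\nabla\varphi_m\cdot\nabla\varphi_m$, and then you still need to rule out concentration and to pair the limit with the merely a.e.\ convergent factor $\alpha(\cdot,\cdot,u_m)$---work you correctly flag as unfinished. The paper bypasses div--curl entirely and obtains genuine \emph{weak $L^1$} convergence of the Joule density by a monotonicity-defect trick. Set
\[
 g_\varepsilon=\big(\sigma(u_\varepsilon,|\nabla\varphi_\varepsilon|)\nabla\varphi_\varepsilon-\sigma(u_\varepsilon,|\nabla\varphi|)\nabla\varphi\big)\cdot\nabla(\varphi_\varepsilon-\varphi)\ge 0\quad\text{a.e. in }Q_T.
\]
From the already-identified weak $L^{p'}$ limit of the flux (your $\chi=\sigma(u,|\nabla\varphi|)\nabla\varphi$, via Minty) together with the approximate and limit elliptic identities one checks $\int_{Q_T}g_\varepsilon\,dxdt\to 0$; since $g_\varepsilon\ge 0$, this is strong $L^1$ convergence, hence $\int_{Q_T}g_\varepsilon z\,dxdt\to 0$ for every $z\in L^\infty(Q_T)$. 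Now expand
\[
 \sigma(u_\varepsilon,|\nabla\varphi_\varepsilon|)|\nabla\varphi_\varepsilon|^2=g_\varepsilon+\sigma(u_\varepsilon,|\nabla\varphi_\varepsilon|)\nabla\varphi_\varepsilon\cdot\nabla\varphi+\sigma(u_\varepsilon,|\nabla\varphi|)\nabla\varphi\cdot\nabla(\varphi_\varepsilon-\varphi).
\]
Tested against any $z\in L^\infty(Q_T)$, the second term is (weak $L^{p'}$)$\times$(fixed $L^p$) and the third is (strong $L^{p'}$, by dominated convergence and $u_\varepsilon\to u$ a.e.)$\times$(weak $L^p$), so both pass to the limit. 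This yields $\sigma(u_\varepsilon,|\nabla\varphi_\varepsilon|)|\nabla\varphi_\varepsilon|^2\rightharpoonup\sigma(u,|\nabla\varphi|)|\nabla\varphi|^2$ weakly in $L^1(Q_T)$. With weak $L^1$ convergence in hand, the bounded a.e.\ convergent factor $\alpha(\cdot,\cdot,u_\varepsilon)\to\alpha(\cdot,\cdot,u)$ (and the correction coming from $f/(1+\varepsilon f)$, respectively your truncation) is absorbed via Egorov's theorem and the equi-integrability of weakly $L^1$-convergent sequences. This closes exactly the gap you identified, without any compensated-compactness machinery.
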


The proof of this theorem is a further development of the approximation method we used in \cite{24}. In this paper, the function $\tau\mapsto\sigma(u,\tau)$ is assumed to satisfy the condition of strict monotonicity
\[
 \big(\sigma\big(u,|\xi|\big)\xi-\sigma\big(u|\eta|\big)\eta\big)\cdot(\xi-\eta)>0\quad\forall\;u\in\mathbb{R},\;\:\forall\; \xi,\eta\in\mathbb{R}^n,\;\:\xi\ne\eta.
\]
This condition allows to prove that the sequence $(\nabla\varphi_\varepsilon)_{\varepsilon>0}$ converges a.e. in $Q_T$ as $\varepsilon\to 0$, where $(\varphi_\varepsilon,u_\varepsilon)_{\varepsilon>0}$ is an approximate solution of the problem under consideration. Therefore, the discussion in \cite{24} includes the large class of source functions $f$ characterized by (H3).
\par
However, due to (\ref{3.1}), in the present paper we have to work only with the weak convergence of the sequence $(\varphi_\varepsilon)_{\varepsilon>0}$ in $L^q\big(0,T;W^{1,q}(\Omega)\big)$ as $\varepsilon\to 0$, which in turn makes the structure condition (\ref{3.2}) necessary for the passage to the limit $\varepsilon\to 0$.

\section{Proof of the theorem}\label{s4}

We begin by introducing two notations. For $\varepsilon>0$, define
\[
 f_\varepsilon(x,t,u,\xi)=\frac{f(x,t,u,\xi)}{1+\varepsilon f(x,t,u,\xi)},\quad \big((x,t),u,\xi\big)\in Q_T\times
 \mathbb{R}\times\mathbb{R}^n.
\]
To our knowledge, this approximation has been introduced for the first time by Bensoussan-Frehse in 1981. The function $f_\varepsilon$ is Carath\'eodory and satisfies the inequalities
\[
 0\le f_\varepsilon(x,t,u,\xi)\le\frac 1\varepsilon\quad\forall\;\big((x,t),u,\xi\big)\in Q_T\times\mathbb{R}\times\mathbb{R}^n.
\]

Let $(u_{0,\varepsilon})_{\varepsilon>0}$ be a sequence of functions in $L^2(\Omega)$ such that $u_{0,\varepsilon}\to u_0$ strongly in $L^1(\Omega)$ as $\varepsilon\to 0$.\hfill$\square$
\smallskip

\noindent
We divide the proof of the theorem into five steps.
\medskip

\noindent
$1^\circ$ {\it Existence of approximate solutions. }\, We have

\begin{lemma}\label{l1}
 For every $\varepsilon>0$ there exists a pair
 \[
  (\varphi_\varepsilon,u_\varepsilon)\in L^p\big(0,T;W^{1,p}(\Omega)\big)\times L^2\big(0,T;W^{1,2}(\Omega)\big)
 \]
such that
\begin{eqnarray}\label{4.1}
&& \left\{\begin{array}{l}
         {\displaystyle\varepsilon\int\limits_{Q_T}|\nabla\varphi_\varepsilon|^{p-2}\nabla\varphi_\varepsilon\cdot\nabla \zeta\,dxdt+\int\limits_{Q_T}\sigma\big(u_\varepsilon,|\nabla\varphi_\varepsilon|\big)\nabla\varphi_\varepsilon\cdot\nabla\zeta\,dxdt}\\[6mm]
         =0\quad\forall\;\zeta\in L^p\big(0,T;W_{\Gamma_D}^{1,p}(\Omega)\big)\,\footnotemark;
        \end{array}\right.\\[3mm]
\label{4.2}
&& \varphi_\varepsilon=\varphi_D\quad \text{a.e. on }\;\Sigma_D;  \\[2mm]
\label{4.3}
&&\exists\; u'_\varepsilon\in L^2\big(0,T;\big(W^{1,2}(\Omega)\big)^*\big);\\[2mm]
\label{4.4}
&&\left\{\begin{array}{l}
          {\displaystyle\int\limits_0^T\langle u'_\varepsilon,v\rangle_{W^{1,2}}dt+\int\limits_{Q_T}\kappa(u_\varepsilon)\nabla u_\varepsilon\cdot\nabla v\,dxdt+g\int\limits_0^T\int\limits_{\partial\Omega}(u_\varepsilon-h)v\,d_xSdt}\\
          {\displaystyle=\int\limits_{Q_T}f_\varepsilon(x,t,u_\varepsilon,\nabla\varphi_\varepsilon)v\,dxdt\quad\forall\; v\in L^2\big(0,T;W^{1,2}(\Omega)\big);}
         \end{array}\right.\\[3mm]
\label{4.5}
&& u_\varepsilon(0)=u_{0,\varepsilon}\quad\text{in }\; L^2(\Omega).
\end{eqnarray}
\footnotetext{If $1<p<2$, for $z\in W^{1,2}(\Omega)$ we define $\big|\nabla z(x)\big|^{p-2}\nabla z(x)=0$ a.e. in $\{x\in\Omega;\nabla z(x)=0\}$.}
\end{lemma}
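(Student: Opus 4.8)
The plan is to construct $(\varphi_\varepsilon,u_\varepsilon)$ as a fixed point of a suitable map, decoupling the two equations. First I would observe that for fixed $\varepsilon>0$ equation \eqref{4.1}--\eqref{4.2} is, for a \emph{given} $u_\varepsilon\in L^2(0,T;W^{1,2}(\Omega))$, a genuine $p$-Laplacian-type problem: the operator $\zeta\mapsto\varepsilon|\nabla\varphi|^{p-2}\nabla\varphi+\sigma(u_\varepsilon,|\nabla\varphi|)\nabla\varphi$ is, thanks to \eqref{3.1} and the $\varepsilon$-regularization, strictly monotone, coercive on $\varphi_D+L^p(0,T;W^{1,p}_{\Gamma_D}(\Omega))$ (coercivity coming from $\varepsilon\int|\nabla\varphi|^p$ together with the lower bound $c_1\tau^p-c_2\le\sigma(u,\tau)\tau^2$ in (H1)), and hemicontinuous; here one works pointwise in $t$ or directly on the Bochner space $L^p(0,T;W^{1,p}(\Omega))$. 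Hence by the classical Browder--Minty theorem there is a unique solution $\varphi_\varepsilon=\mathcal{S}(u_\varepsilon)$, and the a priori bound $\varepsilon\|\nabla\varphi_\varepsilon\|_{L^p}^p+c_1\|\nabla\varphi_\varepsilon\|_{L^p}^p\le C(1+\|\nabla\varphi_D\|_{L^p}^p)$ is uniform in $u_\varepsilon$ (though not yet in $\varepsilon$, which is fine at this stage).

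Next, with $\varphi_\varepsilon=\mathcal S(u_\varepsilon)$ in hand, the right-hand side $f_\varepsilon(x,t,u_\varepsilon,\nabla\varphi_\varepsilon)$ is bounded in $L^\infty(Q_T)$ by $1/\varepsilon$, so equation \eqref{4.4}--\eqref{4.5} is a linear, uniformly parabolic problem for $u_\varepsilon$ (coefficient $\kappa$ bounded between $\kappa_0$ and $\kappa_1$ by (H2)) with a Robin boundary term (which is monotone since $g>0$) and $L^2$ initial datum $u_{0,\varepsilon}$. Standard Galerkin/Lions theory gives a unique $u_\varepsilon\in L^2(0,T;W^{1,2}(\Omega))$ with $u_\varepsilon'\in L^2(0,T;(W^{1,2}(\Omega))^*)$, hence $u_\varepsilon\in C([0,T];L^2(\Omega))$ so that \eqref{4.5} makes sense, together with the energy estimate $\|u_\varepsilon\|_{C(L^2)}^2+\|\nabla u_\varepsilon\|_{L^2(Q_T)}^2\le C(\varepsilon,\|u_{0,\varepsilon}\|_{L^2},\ldots)$. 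This defines a map $\mathcal T:L^2(0,T;W^{1,2}(\Omega))\to L^2(0,T;W^{1,2}(\Omega))$, $u\mapsto u_\varepsilon$ (through $\varphi_\varepsilon=\mathcal S(u)$ and then solving the parabolic problem), whose fixed points are exactly the solutions of \eqref{4.1}--\eqref{4.5}.

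The existence of a fixed point I would get from the Schauder--Tychonoff theorem. The energy estimates above show $\mathcal T$ maps a large ball of $L^2(0,T;W^{1,2}(\Omega))$ into itself; more precisely $\mathcal T$ takes values in a bounded subset of $\{u\in L^2(0,T;W^{1,2}(\Omega)):u'\in L^2(0,T;(W^{1,2})^*)\}$, which by the Aubin--Lions lemma is relatively compact in $L^2(Q_T)$. So, working in the topology of $L^2(Q_T)$ and restricting $\mathcal T$ to a suitable closed convex bounded set, compactness is for free and it remains only to check continuity of $\mathcal T$. For the continuity of $\mathcal S$: if $u_k\to u$ in $L^2(Q_T)$, the uniform $L^p$-bound on $\nabla\varphi_{\varepsilon,k}$ gives weak convergence along a subsequence, and monotonicity of the operator (Minty's trick, using that $\sigma(u_k,|\cdot|)\to\sigma(u,|\cdot|)$ strongly by continuity of $\sigma$ and Vitali's theorem with the growth bound in (H1)) identifies the weak limit as $\mathcal S(u)$; uniqueness of the limit upgrades this to full-sequence convergence. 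Then $f_\varepsilon(\cdot,\cdot,u_k,\nabla\varphi_{\varepsilon,k})$ is bounded in $L^\infty$ and converges a.e. (Carathéodory property of $f_\varepsilon$), hence strongly in every $L^r(Q_T)$, $r<\infty$, by dominated convergence, so the linear parabolic solutions converge as well, giving continuity of $\mathcal T$.

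The main obstacle, I expect, is the limit passage in the $p$-Laplacian equation inside the proof that $\mathcal S$ is continuous: one only has weak convergence of $\nabla\varphi_{\varepsilon,k}$, so identifying the nonlinear limit of $\sigma(u_k,|\nabla\varphi_{\varepsilon,k}|)\nabla\varphi_{\varepsilon,k}$ requires the monotonicity argument rather than any compactness in the gradients. This is precisely the place where assumption \eqref{3.1} (plus the $\varepsilon|\nabla\varphi|^{p-2}\nabla\varphi$ term giving strict monotonicity and coercivity for fixed $\varepsilon$) does the work; one has to be a little careful to handle the case $1<p<2$ using the convention in the footnote, and to verify that the $\sigma(u_k,|\cdot|)$-dependence on the $L^2(Q_T)$-convergent $u_k$ passes to the limit, for which the upper bound $\sigma(u,\tau)\le c_3(1+\tau^2)^{(p-2)/2}$ in (H1) together with a Vitali/equi-integrability argument is the right tool.
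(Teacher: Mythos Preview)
Your overall strategy is the right one and is essentially what the paper does by proxy: the paper's own proof consists of the single observation that
\[
\sigma_\varepsilon(u,\tau)=\varepsilon\tau^{p-2}+\sigma(u,\tau)
\]
is \emph{strictly} monotone in the $\xi$-variable (combining \eqref{3.1} with the standard $p$-Laplacian inequality \eqref{4.6}), and then cites \cite[Lemma~1]{24}, where the approximate system with a strictly monotone $\sigma$ is solved precisely by a Schauder fixed-point argument of the type you outline. So your decomposition (solve the $p$-Laplacian problem for frozen $u$ via Browder--Minty, solve the parabolic problem with bounded right-hand side $f_\varepsilon\le 1/\varepsilon$, close by Schauder with Aubin--Lions compactness) matches the underlying argument.

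There is, however, one genuine gap in your continuity step for $\mathcal T$. You identify the weak limit of $\nabla\varphi_{\varepsilon,k}$ via Minty's trick and then assert that $f_\varepsilon(\cdot,\cdot,u_k,\nabla\varphi_{\varepsilon,k})$ converges a.e.\ ``by the Carath\'eodory property''. But a.e.\ convergence of $f_\varepsilon$ requires a.e.\ convergence of $\nabla\varphi_{\varepsilon,k}$, and weak $L^p$-convergence alone does not give that; Minty's trick only pins down the weak limit of the flux. The way out is exactly the feature you mention but do not exploit: for fixed $\varepsilon>0$ the operator $\xi\mapsto\varepsilon|\xi|^{p-2}\xi+\sigma(u,|\xi|)\xi$ is \emph{strictly} monotone (uniformly, via \eqref{4.6}), so once you have shown
\[
\int_{Q_T}\bigl(\sigma_\varepsilon(u_k,|\nabla\varphi_{\varepsilon,k}|)\nabla\varphi_{\varepsilon,k}-\sigma_\varepsilon(u_k,|\nabla\varphi_\varepsilon|)\nabla\varphi_\varepsilon\bigr)\cdot\nabla(\varphi_{\varepsilon,k}-\varphi_\varepsilon)\,dxdt\longrightarrow 0,
\]
inequality \eqref{4.6} forces $\nabla\varphi_{\varepsilon,k}\to\nabla\varphi_\varepsilon$ strongly in $[L^p(Q_T)]^n$ (or at least in measure, hence a.e.\ along a subsequence), and then your dominated-convergence argument for $f_\varepsilon$ goes through. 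In short: replace the Minty identification by a strict-monotonicity argument upgrading weak to strong gradient convergence; this is precisely why the paper introduces the $\varepsilon|\nabla\varphi|^{p-2}\nabla\varphi$ term and then invokes \cite{24}.

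A minor remark: your parabolic step is ``linear'' only if you freeze $\kappa$ at the \emph{input} $u$ of the fixed-point map (so that the equation for the output $\tilde u$ reads $\tilde u'-\nabla\cdot(\kappa(u)\nabla\tilde u)+\ldots=f_\varepsilon(\cdot,\cdot,u,\nabla\mathcal S(u))$); your notation $\kappa(u_\varepsilon)$ blurs input and output. With that convention the problem is indeed linear uniformly parabolic with $L^\infty$ source and $L^2$ initial datum, and the Lions/Galerkin machinery applies exactly as you say.
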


\begin{proof}
 To begin with, we notice that, for all $\xi,\eta\in\mathbb{R}^n$,
\begin{eqnarray}\label{4.6}
&&\big(|\xi|^{p-2}\xi-|\eta|^{p-2}\eta\big)\cdot(\xi-\eta)\nonumber\\
&&\ge\left\{\begin{array}{l@{\qquad}l}
          {\displaystyle\frac{p-1}{\big(1+|\xi|+|\eta|\big)^{2-p}}\,|\xi-\eta|^2}&\text{if }\; 1<p\le 2,\\[4mm]
          {\displaystyle\min\Big\{\frac12,\frac1{2^{p-2}}\Big\}\,|\xi-\eta|^p}&\text{if }\;2<p<+\infty
         \end{array}\right.
\end{eqnarray}
(cf. \cite[pp. 71, 74]{21}, \cite{24}).
\par
For $\varepsilon>0$ and $(u,\tau)\in \mathbb{R}\times\mathbb{R}_+$, define
\[
 \begin{array}{l@{\qquad}l}
  \sigma_\varepsilon(u,0)=\sigma(u,0) &\text{if }\;\tau=0,\\[1mm]
  \sigma_\varepsilon(u,\tau)=\varepsilon\tau^{p-2}+\sigma(u,\tau)&\text{if }\; 0<\tau<+\infty.
 \end{array}
\]
Thus, by (\ref{3.1}) and (\ref{4.6}),
\[
 \big(\sigma_\varepsilon\big(u,|\xi|\big)\xi-\sigma_\varepsilon\big(u,|\eta|\big)\eta\big)\cdot(\xi-\eta)\ge\varepsilon\big(|\xi|^{p-2}\xi-|\eta|^{p-2}\eta\big)\cdot(\xi-\eta)>0
\]
for all $u\in\mathbb{R}$ and all $\xi,\eta\in\mathbb{R}^n$, $\xi\ne\eta$.
\par
The assertion of Lemma 1 now follows from \cite[Lemma 1]{24} with $\sigma_\varepsilon$ in place of $\sigma$. 
\end{proof}

\noindent
$2^\circ$ {\it A-priori estimates. }\, We have

\begin{lemma}\label{l2}
 Let be $(\varphi_\varepsilon,u_\varepsilon)$ as in Lemma~$1$. Then, for all $0<\varepsilon\le 1$,
 \begin{eqnarray}\label{4.7}
  &&\varepsilon\big\|\,|\nabla\varphi_\varepsilon|\,\big\|_{L^p}^p+\|\varphi_\varepsilon\|_{L^p(W^{1,p})}^p\le c\big(1+\big\|\,|\nabla\varphi_D|\,\big\|_{L^p}^p\big)\,\footnotemark;\\[2mm]
  \label{4.8}
  &&\left\{\begin{array}{l}
            {\displaystyle\|u_\varepsilon\|_{L^\infty(L^1)}+\lambda\int\limits_{Q_T}\frac{|\nabla u_\varepsilon|^2}{\big(1+|u_\varepsilon|\big)^{1+\lambda}}\,dxdt}\\[4mm]
            \le c\big(1+\|u_{0,\varepsilon}\|_{L^1}+\big\|\,|\nabla\varphi_D|\,\big\|_{L^p}^p\big),\quad 0<\lambda<1;
           \end{array}\right.
\end{eqnarray}
\footnotetext{Without any further reference, in what follows, by $c$ we denote constants which may change their numerical value from line to line, but do not depend on $\varepsilon$.}
\begin{eqnarray}\label{4.9}
 \|u_\varepsilon\|_{L^q(W^{1,q})}\!&\!\le\!&\!c\qquad\forall\;1<q<\frac{n+2}{n+1},\\[1mm]
 \label{4.10}
 \|u_\varepsilon\|_{L^r(L^r)}\!&\!\le\! &\!c\qquad\forall\; 1<r<\frac{n+2}n,\\[1mm]
 \label{4.11}
 \|u'_\varepsilon\|_{L^1((W^{1,q'})^*)}\!&\!\le\! &\!c\qquad\forall\; 1<q<\frac{n+2}{n+1}.
\end{eqnarray}
\end{lemma}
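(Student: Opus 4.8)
The plan is to obtain each bound by inserting a suitable test function in the variational identities (\ref{4.1}) and (\ref{4.4}) and invoking the structural hypotheses (H1)--(H3), and then to combine the resulting information in the Boccardo--Gallou\"et manner. For (\ref{4.7}) I would test (\ref{4.1}) with $\zeta=\varphi_\varepsilon-\varphi_D$, which lies in $L^p\big(0,T;W^{1,p}_{\Gamma_D}(\Omega)\big)$ by (\ref{2.2}) and (\ref{4.2}). Moving the two terms containing $\nabla\varphi_D$ to the right, the left-hand side is bounded below by $\varepsilon\int_{Q_T}|\nabla\varphi_\varepsilon|^p+c_1\int_{Q_T}|\nabla\varphi_\varepsilon|^p-c_2|Q_T|$ via the coercivity inequality in (H1), while the right-hand side is controlled by Young's inequality together with the growth bound $\sigma(u,\tau)\le c_3(1+\tau^2)^{(p-2)/2}$ in (H1) (which yields $\sigma(u_\varepsilon,|\nabla\varphi_\varepsilon|)|\nabla\varphi_\varepsilon|\le c_3(1+|\nabla\varphi_\varepsilon|)^{p-1}$); choosing the Young parameters small and absorbing the resulting $\int|\nabla\varphi_\varepsilon|^p$-terms on the left gives (\ref{4.7}) (interpreting $\|\cdot\|_{W^{1,p}}$ as the gradient part of the norm, or else using Poincar\'e's inequality in $W^{1,p}_{\Gamma_D}(\Omega)$).

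For (\ref{4.8}) I would test (\ref{4.4}), restricted to a time interval $(0,t)$, with $v=\beta_\lambda(u_\varepsilon)$, where $\beta_\lambda(s)=\operatorname{sign}(s)\bigl(1-(1+|s|)^{-\lambda}\bigr)$; this function is Lipschitz and vanishes at $0$, so $\beta_\lambda(u_\varepsilon)\in L^2\big(0,T;W^{1,2}(\Omega)\big)$ is admissible. By the chain rule the time-derivative term equals $\int_\Omega B_\lambda\big(u_\varepsilon(t)\big)\,dx-\int_\Omega B_\lambda(u_{0,\varepsilon})\,dx$, where $B_\lambda\ge0$ is the primitive of $\beta_\lambda$ with $B_\lambda(s)\le|s|$; the diffusion term is $\ge\kappa_0\lambda\int_0^t\!\int_\Omega|\nabla u_\varepsilon|^2(1+|u_\varepsilon|)^{-1-\lambda}$ by (H2); since $g>0$, $s\beta_\lambda(s)\ge0$ and $|\beta_\lambda|\le1$, the boundary term is bounded below by $-g|h|\,|\partial\Omega|\,T$; and the right-hand side is $\le c_4\int_{Q_T}(1+|\nabla\varphi_\varepsilon|^p)$ by (H3) and $|\beta_\lambda|\le1$, hence controlled by (\ref{4.7}). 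Finally, using the elementary inequality $|s|\le B_\lambda(s)+(1-\lambda)^{-1}(1+|s|)^{1-\lambda}$ together with Young's inequality (recall $1-\lambda<1$) one recovers $\|u_\varepsilon(t)\|_{L^1}$, and taking the supremum over $t\in[0,T]$ yields (\ref{4.8}).

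For (\ref{4.9}) and (\ref{4.10}) the point is that $w_\varepsilon:=(1+|u_\varepsilon|)^{(1-\lambda)/2}$ satisfies $|\nabla w_\varepsilon|^2=\tfrac{(1-\lambda)^2}{4}|\nabla u_\varepsilon|^2(1+|u_\varepsilon|)^{-1-\lambda}$, so (\ref{4.8}) bounds $w_\varepsilon$ uniformly in $L^\infty\big(0,T;L^2(\Omega)\big)\cap L^2\big(0,T;W^{1,2}(\Omega)\big)$ (for the $L^\infty(L^2)$-bound one uses $\int_\Omega(1+|u_\varepsilon|)^{1-\lambda}\le|\Omega|^\lambda(|\Omega|+\|u_\varepsilon(t)\|_{L^1})^{1-\lambda}$); the parabolic embedding of this space into $L^{2(n+2)/n}(Q_T)$ then gives a uniform bound for $1+|u_\varepsilon|$ in $L^{(1-\lambda)(n+2)/n}(Q_T)$. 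Writing $\int_{Q_T}|\nabla u_\varepsilon|^q=\int_{Q_T}\bigl(|\nabla u_\varepsilon|^2(1+|u_\varepsilon|)^{-1-\lambda}\bigr)^{q/2}(1+|u_\varepsilon|)^{q(1+\lambda)/2}$ and applying H\"older with exponents $2/q$ and $2/(2-q)$, the first factor is controlled by (\ref{4.8}) and the second by the above integrability as soon as $\tfrac{q(1+\lambda)}{2-q}\le\tfrac{(1-\lambda)(n+2)}{n}$, a condition which holds for $\lambda$ small whenever $q<\tfrac{n+2}{n+1}$ (the borderline $\lambda=0$ requiring precisely $q\le\tfrac{n+2}{n+1}$); this proves (\ref{4.9}). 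Estimate (\ref{4.10}) then follows from the same integrability of $1+|u_\varepsilon|$ as $\lambda\downarrow0$ (or by interpolating (\ref{4.9}) with the $L^\infty\big(0,T;L^1(\Omega)\big)$-bound).

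For (\ref{4.11}), fix $q<\tfrac{n+2}{n+1}$; then $q'>n$ and $q'>2$. Localizing (\ref{4.4}) in time (as in Remark \ref{r4}) gives, for a.e.\ $t$ and all $z\in W^{1,q'}(\Omega)\subset W^{1,2}(\Omega)$, an identity for $\big\langle u_\varepsilon'(t),z\big\rangle_{W^{1,2}}$ with the diffusion, boundary, and source integrals on the right. I would estimate the diffusion term by $\kappa_1\|\nabla u_\varepsilon(t)\|_{L^q}\|z\|_{W^{1,q'}}$, the boundary term — via the trace inequality and the embedding $W^{1,q'}(\Omega)\hookrightarrow C(\overline\Omega)$ — by $c\big(1+\|u_\varepsilon(t)\|_{L^1}+\|\nabla u_\varepsilon(t)\|_{L^q}\big)\|z\|_{W^{1,q'}}$, and the source term by $\big\|f_\varepsilon(\cdot,t,u_\varepsilon,\nabla\varphi_\varepsilon)\big\|_{L^1(\Omega)}\|z\|_{C(\overline\Omega)}$; taking the supremum over $z$ and integrating in $t$, the right-hand side is finite by (\ref{4.7})--(\ref{4.9}) (note $\int_{Q_T}f_\varepsilon\le c_4\int_{Q_T}(1+|\nabla\varphi_\varepsilon|^p)$), which gives (\ref{4.11}). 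I expect the \textbf{main obstacle} to be the Boccardo--Gallou\"et bookkeeping behind (\ref{4.9})--(\ref{4.10}): one must thread the parabolic Gagliardo--Nirenberg--Sobolev embedding through the $\lambda$-dependent family of H\"older estimates so that exactly the range $1<q<\tfrac{n+2}{n+1}$ emerges, while keeping every constant independent of $\varepsilon$ — which is precisely why the \emph{uniform} bound (\ref{4.7}) on $\varphi_\varepsilon$ is indispensable here.
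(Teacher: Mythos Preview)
Your proposal is correct and matches the paper's approach: the paper proves (\ref{4.7}) exactly by inserting $\zeta=\varphi_\varepsilon-\varphi_D$ into (\ref{4.1}) and invoking (H1) with H\"older/Young, and for (\ref{4.8})--(\ref{4.11}) it simply refers to \cite[Lemma~2]{24}, where the same Boccardo--Gallou\"et machinery you outline (the test function $\beta_\lambda(u_\varepsilon)$, the auxiliary function $(1+|u_\varepsilon|)^{(1-\lambda)/2}$, the parabolic embedding, and the dual estimate via $W^{1,q'}(\Omega)\hookrightarrow C(\overline\Omega)$) is carried out. Your bookkeeping on the admissible range of $q$ and the treatment of the Robin boundary term are in line with that argument.
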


\begin{proof}
 By (\ref{4.2}), the function $\varphi_\varepsilon-\varphi_D$ is in $L^p\big(0,T;W_{\Gamma_D}^{1,p}(\Omega)\big)$. Inserting this function into (\ref{4.1}), we find
 \begin{eqnarray*}
  &&\varepsilon\int\limits_{Q_T}|\nabla\varphi_\varepsilon|^pdxdt+\int\limits_{Q_T}\sigma\big(u_\varepsilon,|\nabla\varphi_\varepsilon|\big)|\nabla\varphi_\varepsilon|^2dxdt\\
 && =\varepsilon\int\limits_{Q_T}|\nabla\varphi_\varepsilon|^{p-2}\nabla\varphi_\varepsilon\cdot\nabla\varphi_D\, dxdt+\int\limits_{Q_T}\sigma\big(u_\varepsilon,|\nabla\varphi_\varepsilon|\big)\nabla\varphi_\varepsilon\cdot\nabla\varphi_D\,dxdt.
 \end{eqnarray*}
From this, (\ref{4.7}) easily follows by combining (H1) and H\"older's inequality.
\par
Estimates (\ref{4.8})--(\ref{4.11}) can be proved by following line by line the proof of \cite[Lemma~2]{24}.
\end{proof}

\noindent
$3^\circ$ {\it Convergence of subsequences. }\, Let be $(\varphi_\varepsilon,u_\varepsilon)$ as in Lemma~1. From (\ref{4.7}) and (\ref{4.9}), (\ref{4.10}) we conclude that there exists a subsequence of $(\varphi_\varepsilon,u_\varepsilon)_{\varepsilon>0}$ (not relabelled) such that
\begin{equation}\label{4.12}
\varphi_\varepsilon\longrightarrow\varphi\quad\text{ weakly in }\;L^p\big(0,T;W^{1,p}(\Omega)\big)
\end{equation}
and
\begin{equation}\label{4.13}
 \left\{\begin{array}{l}
         {\displaystyle u_\varepsilon\to u\;\text{ weakly in }\; L^q\big(0,T;W^{1,q}(\Omega)\big)\quad\Big(1<q<\frac{n+2}{n+1}\Big)}\\[2mm]
         {\displaystyle \text{and weakly in }\; L^r\big(0,T;L^r(\Omega)\big)\quad\Big(1<r<\frac{n+2}n\Big)}         
        \end{array}\right.
\end{equation}
as $\varepsilon\to 0$. Then (\ref{4.2}) and (\ref{4.12}) yield $\varphi=\varphi_D$ a.e. on $\Sigma_D$, i.e., $\varphi$ satisfies (\ref{2.6}).
\par
Next, fix any $1<q<\frac{n+2}{n+1}$. Taking into account the embeddings (\ref{2.10}), from (\ref{4.9}) and (\ref{4.11}) we obtain by the aid of a well-known compactness result \cite[Prop.~1]{4} or \cite[Cor.~4]{26} the existence of a subsequence of $(u_\varepsilon)_{\varepsilon>0}$ (not relabelled) such that $u_\varepsilon\to u$ strongly in $L^q\big(0,T;L^2(\Omega)\big)$, and therefore
\begin{equation}\label{4.14}
 u_\varepsilon\longrightarrow u\quad\text{a.e. in }\; Q_T\;\text{ as }\; \varepsilon\longrightarrow 0.
\end{equation}

We prove estimate (\ref{3.7}). To begin with, we find an $0<\varepsilon_0\le 1$ such that
\[
 \|u_{0,\varepsilon}\|_{L^1}\le 1+\|u_0\|_{L^1}\quad\forall\;0<\varepsilon\le\varepsilon_0.
\]
Then, given any $\psi\in L^\infty(0,T)$, $\psi\ge 0$ a.e. in $[0,T]$, from (\ref{4.8}) it follows that
\begin{equation}\label{4.15}
 \int\limits_{Q_T}\big|u_\varepsilon(x,t)\psi(t)\big|dxdt\le C_0\int\limits_0^T\psi(t)dt\quad\forall\; 0<\varepsilon\le\varepsilon_0
\end{equation}
where
\[
 C_0:=c\big(1+\|u_0\|_{L^1}+\big\|\,|\nabla\varphi_D|\,\big\|_{L^p}^p\big).
\]
Taking the $\liminf\limits_{\varepsilon\to 0}$ in (\ref{4.15}), we find
\[
 \int\limits_{Q_T}\big|u(x,t)\psi(t)\big|dxdt\le C_0\int\limits_0^T\psi(t)dt.
\]
Hence,
\[
 \int\limits_\Omega\big|u(x,t)|dx\le C_0\quad\text{for a.e. }\; t\in[0,T].
\]

Next, from (\ref{4.8}) and (\ref{4.14}) we infer (by passing to a subsequence if necessary) that
\[
 \frac{\nabla u_\varepsilon}{\big(1+|u_\varepsilon|\big)^{(1+\lambda)/2}}\longrightarrow\frac{\nabla u}{\big(1+|u|\big)^{(1+\lambda)/2}}\quad \text{ weakly in }\;\big[L^2(Q_T)\big]^n
\]
as $\varepsilon\to 0$. Then taking the $\liminf\limits_{\varepsilon\to 0}$ in (\ref{4.8}) gives
\[
 \lambda\int\limits_{Q_T}\frac{|\nabla u|^2}{\big(1+
 |u|\big)^{1+\lambda}}\,dxdt\le C_0.
\]
\\[-5mm]
\text{}\hfill$\square$

\noindent
Summarizing, from (\ref{4.12})--(\ref{4.14}) we deduced the existence of a pair
\[
 (\varphi,u)\in L^p\big(0,T;W^{1,p}(\Omega)\big)\times\Big(\bigcap\limits_{1<q<(n+2)/(n+1)} L^q\big(0,T;W^{1,q}(\Omega)\big)\Big)
\]
which satisfies (\ref{2.6}) and (\ref{3.7}), (\ref{3.8}). It remains to prove that $(\varphi,u)$ satisfies the variational identity in (\ref{2.5}) and that (\ref{3.4})--(\ref{3.6}) hold true. This can be easily done by the aid of Lemma~3 and 4 we are going to prove next. 
\medskip

\noindent
$4^\circ$ {\it Passage to the limit $\varepsilon\to 0$. }\, We have

\begin{lemma}\label{l3}
 Let be $(\varphi_\varepsilon,u_\varepsilon)$ as in Lemma~$1$, and let be $(\varphi,u)$ as in {\rm (\ref{4.12})}, {\rm (\ref{4.13})}. Then
\begin{equation}\label{4.16}
 \int\limits_{Q_T}\sigma\big(u,|\nabla\varphi|\big)\nabla\varphi\cdot\nabla\zeta\,dxdt=0\quad\forall\;\zeta\in L^p\big(0,T;W_{\Gamma_D}^{1,p}(\Omega)\big)
\end{equation}
i.e., $(\varphi,u)$ satisfies {\rm (\ref{2.5})};
\begin{eqnarray}\label{4.17}
 && \sigma\big(u_\varepsilon,|\nabla\varphi_\varepsilon|\big)\nabla\varphi_\varepsilon\longrightarrow\sigma\big(u,|\nabla\varphi|\big)\nabla\varphi\;\text{ weakly in }\; \big[L^{p'}(Q_T)\big]^n\;\text{ as }\;\varepsilon\longrightarrow 0;
 \\[2mm]
 \label{4.18}
 &&\sigma \big(u_\varepsilon,|\nabla\varphi_\varepsilon|\big)|\nabla\varphi_\varepsilon|^2\longrightarrow\sigma\big(u,|\nabla\varphi|\big)|\nabla\varphi|^2\;\text{ weakly in }\; L^1(Q_T)\;\text{ as }\;\varepsilon\longrightarrow 0.        
\end{eqnarray}
\end{lemma}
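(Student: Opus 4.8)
The plan is to carry out the Minty--Browder monotonicity argument in a form that uses only the weak convergences (\ref{4.12})--(\ref{4.13}) and the a.e.\ convergence (\ref{4.14}) of $u_\varepsilon$, and \emph{no} a.e.\ convergence of $(\nabla\varphi_\varepsilon)$. Put $\bs a_\varepsilon:=\sigma_\varepsilon\big(u_\varepsilon,|\nabla\varphi_\varepsilon|\big)\nabla\varphi_\varepsilon$. By the growth bound in (H1), $\big|\sigma\big(u_\varepsilon,|\nabla\varphi_\varepsilon|\big)\nabla\varphi_\varepsilon\big|\le c\big(1+|\nabla\varphi_\varepsilon|^{p-1}\big)$, which is bounded in $\big[L^{p'}(Q_T)\big]^n$ by (\ref{4.7}); moreover $\varepsilon\big(\int_{Q_T}|\nabla\varphi_\varepsilon|^p\,dxdt\big)^{1/p'}=\varepsilon^{1/p}\big(\varepsilon\int_{Q_T}|\nabla\varphi_\varepsilon|^p\,dxdt\big)^{1/p'}\to0$ by (\ref{4.7}), so $\varepsilon|\nabla\varphi_\varepsilon|^{p-2}\nabla\varphi_\varepsilon\to\bs0$ strongly in $\big[L^{p'}(Q_T)\big]^n$. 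Passing to a subsequence, $\bs a_\varepsilon\rightharpoonup\bs\chi$ and $\sigma\big(u_\varepsilon,|\nabla\varphi_\varepsilon|\big)\nabla\varphi_\varepsilon\rightharpoonup\bs\chi$ weakly in $\big[L^{p'}(Q_T)\big]^n$ for some $\bs\chi$, and letting $\varepsilon\to0$ in (\ref{4.1}) gives $\int_{Q_T}\bs\chi\cdot\nabla\zeta\,dxdt=0$ for all $\zeta\in L^p\big(0,T;W_{\Gamma_D}^{1,p}(\Omega)\big)$. Everything then reduces to proving $\bs\chi=\sigma\big(u,|\nabla\varphi|\big)\nabla\varphi$ a.e.\ in $Q_T$, which yields (\ref{4.16}) and (\ref{4.17}) at once.

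To identify $\bs\chi$, I would first test (\ref{4.1}) with $\zeta=\varphi_\varepsilon-\varphi_D$, which gives $\int_{Q_T}\bs a_\varepsilon\cdot\nabla\varphi_\varepsilon\,dxdt=\int_{Q_T}\bs a_\varepsilon\cdot\nabla\varphi_D\,dxdt$, hence, letting $\varepsilon\to0$ and using that $\varphi-\varphi_D$ is admissible in the limit identity, $\int_{Q_T}\bs a_\varepsilon\cdot\nabla\varphi_\varepsilon\,dxdt\to\int_{Q_T}\bs\chi\cdot\nabla\varphi_D\,dxdt=\int_{Q_T}\bs\chi\cdot\nabla\varphi\,dxdt$. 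The decisive point is that, since (\ref{3.1}) and (\ref{4.6}) make $\xi\mapsto\sigma_\varepsilon\big(u_\varepsilon(x,t),|\xi|\big)\xi$ monotone for a.e.\ $(x,t)$, the inequality $\int_{Q_T}\big(\bs a_\varepsilon-\sigma_\varepsilon\big(u_\varepsilon,|\bs w|\big)\bs w\big)\cdot(\nabla\varphi_\varepsilon-\bs w)\,dxdt\ge0$ holds for \emph{every} $\bs w\in\big[L^p(Q_T)\big]^n$, not merely for gradients of admissible test functions. Expanding it and letting $\varepsilon\to0$ --- the $\int\bs a_\varepsilon\cdot\nabla\varphi_\varepsilon$ term by the energy identity just obtained; the $\int\bs a_\varepsilon\cdot\bs w$ term by $\bs a_\varepsilon\rightharpoonup\bs\chi$ against the fixed $\bs w$; and the $\int\sigma_\varepsilon\big(u_\varepsilon,|\bs w|\big)\bs w\cdot(\nabla\varphi_\varepsilon-\bs w)$ term because $\sigma_\varepsilon\big(u_\varepsilon,|\bs w|\big)\bs w\to\sigma\big(u,|\bs w|\big)\bs w$ strongly in $\big[L^{p'}(Q_T)\big]^n$ (continuity of $\sigma$, (\ref{4.14}), the growth bound, dominated convergence, and the vanishing of the $\varepsilon$-part) while $\nabla\varphi_\varepsilon\rightharpoonup\nabla\varphi$ in $\big[L^p\big]^n$ --- one arrives at $\int_{Q_T}\big(\bs\chi-\sigma\big(u,|\bs w|\big)\bs w\big)\cdot(\nabla\varphi-\bs w)\,dxdt\ge0$ for all $\bs w\in\big[L^p(Q_T)\big]^n$. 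Choosing $\bs w=\nabla\varphi-\theta\bs\psi$ with $\bs\psi\in\big[L^p(Q_T)\big]^n$, dividing by $\theta>0$, letting $\theta\to0^+$ (again continuity of $\xi\mapsto\sigma(u,|\xi|)\xi$ and dominated convergence), and then replacing $\bs\psi$ by $-\bs\psi$, one obtains $\int_{Q_T}\big(\bs\chi-\sigma\big(u,|\nabla\varphi|\big)\nabla\varphi\big)\cdot\bs\psi\,dxdt=0$ for all $\bs\psi$, i.e.\ $\bs\chi=\sigma\big(u,|\nabla\varphi|\big)\nabla\varphi$ a.e.; this proves (\ref{4.16}) and (\ref{4.17}).

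For (\ref{4.18}) I would use monotonicity once more, now with $\bs w=\nabla\varphi$. Writing $h_\varepsilon:=\sigma\big(u_\varepsilon,|\nabla\varphi_\varepsilon|\big)|\nabla\varphi_\varepsilon|^2\ge0$, monotonicity of $\xi\mapsto\sigma\big(u_\varepsilon,|\xi|\big)\xi$ gives $h_\varepsilon\ge k_\varepsilon$ with $k_\varepsilon:=\sigma\big(u_\varepsilon,|\nabla\varphi_\varepsilon|\big)\nabla\varphi_\varepsilon\cdot\nabla\varphi+\sigma\big(u_\varepsilon,|\nabla\varphi|\big)\nabla\varphi\cdot(\nabla\varphi_\varepsilon-\nabla\varphi)$. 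Using $\sigma\big(u_\varepsilon,|\nabla\varphi_\varepsilon|\big)\nabla\varphi_\varepsilon\rightharpoonup\bs\chi=\sigma\big(u,|\nabla\varphi|\big)\nabla\varphi$ in $\big[L^{p'}\big]^n$, $\sigma\big(u_\varepsilon,|\nabla\varphi|\big)\nabla\varphi\to\sigma\big(u,|\nabla\varphi|\big)\nabla\varphi$ strongly in $\big[L^{p'}\big]^n$, and $\nabla\varphi_\varepsilon-\nabla\varphi\rightharpoonup\bs0$ in $\big[L^p\big]^n$, one gets $k_\varepsilon\rightharpoonup\sigma\big(u,|\nabla\varphi|\big)|\nabla\varphi|^2$ weakly in $L^1(Q_T)$, in particular $\int_{Q_T}k_\varepsilon\,dxdt\to\int_{Q_T}\sigma\big(u,|\nabla\varphi|\big)|\nabla\varphi|^2\,dxdt=\int_{Q_T}\bs\chi\cdot\nabla\varphi\,dxdt$. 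Since $\int_{Q_T}h_\varepsilon\,dxdt=\int_{Q_T}\bs a_\varepsilon\cdot\nabla\varphi_\varepsilon\,dxdt-\varepsilon\int_{Q_T}|\nabla\varphi_\varepsilon|^p\,dxdt$, the second term being $\ge0$, and $\int_{Q_T}\bs a_\varepsilon\cdot\nabla\varphi_\varepsilon\,dxdt\to\int_{Q_T}\bs\chi\cdot\nabla\varphi\,dxdt$, the inequality $\int_{Q_T}h_\varepsilon\,dxdt\ge\int_{Q_T}k_\varepsilon\,dxdt$ forces $\varepsilon\int_{Q_T}|\nabla\varphi_\varepsilon|^p\,dxdt\to0$ and $\int_{Q_T}(h_\varepsilon-k_\varepsilon)\,dxdt\to0$. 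As $h_\varepsilon-k_\varepsilon\ge0$, this means $h_\varepsilon-k_\varepsilon\to0$ strongly in $L^1(Q_T)$; hence $h_\varepsilon=k_\varepsilon+(h_\varepsilon-k_\varepsilon)\rightharpoonup\sigma\big(u,|\nabla\varphi|\big)|\nabla\varphi|^2$ weakly in $L^1(Q_T)$, which is (\ref{4.18}).

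I expect the identification of $\bs\chi$ in the second step to be the genuine obstacle: with (\ref{3.1}) merely monotone rather than strictly monotone, the a.e.\ convergence of $(\nabla\varphi_\varepsilon)$ exploited in \cite{24} is unavailable. What makes the argument go through is that (\ref{3.1}) is a \emph{pointwise} inequality, so it can be integrated against an arbitrary vector field $\bs w\in\big[L^p(Q_T)\big]^n$ (not just against gradients of test functions for (\ref{4.1})), and that the energy identity obtained from (\ref{4.1}) with $\zeta=\varphi_\varepsilon-\varphi_D$, together with the strong $\big[L^{p'}\big]^n$-convergence of the ``frozen'' nonlinearities $\sigma\big(u_\varepsilon,|\bs w|\big)\bs w$ (which rests on $u_\varepsilon\to u$ a.e.), supplies exactly the data the Minty lemma needs; the weak-to-strong $L^1$ passage in the proof of (\ref{4.18}) is, by comparison, routine.
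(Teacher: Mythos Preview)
Your proposal is correct and follows essentially the same Minty--Browder strategy as the paper. The only difference is organizational: the paper first proves (\ref{4.16}) by a Minty argument restricted to test functions $\psi+\varphi_D$ and then proves (\ref{4.17}) by a second Minty argument with arbitrary $\bs G\in\big[L^p(Q_T)\big]^n$ after extracting a weak limit $\bs F$, whereas you merge the two steps---you extract the weak limit $\bs\chi$ first, identify it via one Minty argument with arbitrary $\bs w$, and get (\ref{4.16}) for free from the limit identity $\int_{Q_T}\bs\chi\cdot\nabla\zeta=0$. Your handling of the regularization (working with $\bs a_\varepsilon=\sigma_\varepsilon(u_\varepsilon,|\nabla\varphi_\varepsilon|)\nabla\varphi_\varepsilon$ and the exact energy identity $\int\bs a_\varepsilon\cdot\nabla\varphi_\varepsilon=\int\bs a_\varepsilon\cdot\nabla\varphi_D$) is slightly cleaner than the paper's $\liminf$ inequality (\ref{4.20}), but the content is the same. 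For (\ref{4.18}) your $h_\varepsilon-k_\varepsilon$ is exactly the paper's $g_\varepsilon$, and the argument is identical.
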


\noindent
{\it Proof of\/} (\ref{4.16}) (cf. the ``monotonicity trick'' in \cite[pp. 161, 172]{22}, \cite[p. 474]{30}). The function
$\varphi_\varepsilon-\varphi_D$ is in $L^p\big(0,T;W_{\Gamma_D}^{1,p}(\Omega)\big)$ (see (\ref{4.2})). Thus, given any $\psi\in L^p\big(0,T;W_{\Gamma_D}^{1,p}(\Omega)\big)$, the function $\zeta=\varphi_\varepsilon-\varphi_D-\psi$ is admissible in (\ref{4.1}). By the monotonicity condition (\ref{3.1}) ($\xi=\nabla\varphi_\varepsilon$ and $\eta=\nabla(\psi+\varphi_D)$),
\begin{eqnarray*}
&&0=\varepsilon\int\limits_{Q_T}|\nabla\varphi_\varepsilon|^{p-2}\nabla\varphi_\varepsilon\cdot\nabla\big(\varphi_\varepsilon-(\psi+\varphi_D)\big)dxdt\\
&&\qquad 
+\int\limits_{Q_T}\sigma\big(u_\varepsilon,|\nabla\varphi_\varepsilon|\big)\nabla\varphi_\varepsilon\cdot\nabla\big(\varphi_\varepsilon-(\psi+\varphi_D)\big)dxdt\\
&&\ge-\varepsilon\int\limits_{Q_T}|\nabla\varphi_\varepsilon|^{p-2}\nabla\varphi_\varepsilon\cdot\nabla(\psi+\varphi_D)dxdt\\
&&\qquad+\int\limits_{Q_T}\sigma\big(u_\varepsilon,\big|\nabla(\psi+\varphi_D)\big|\big)\nabla(\psi+\varphi_D)\cdot\nabla\big(\varphi_\varepsilon-(\psi+\varphi_D)\big)dxdt.
\end{eqnarray*}
The passage to the limit $\varepsilon\to 0$ gives
\begin{equation}\label{4.19}
0\ge\int\limits_{Q_T}\sigma\big(u,\big|\nabla(\psi+\varphi_D)\big|\big)\nabla(\psi+\varphi_D)\cdot\nabla\big(\varphi-(\psi+\varphi_D)\big)dxdt 
\end{equation}
(cf. (\ref{4.7}), (\ref{4.12}) and (\ref{4.14})).
\par
Let $\zeta\in L^p\big(0,T;W_{\Gamma_D}^{1,p}(\Omega)\big)$. For any $\lambda>0$, we insert $\psi=\varphi-\varphi_D\mp\lambda\zeta$ into (\ref{4.19}), divide then by $\lambda$ and carry through the passage to the limit $\lambda\to 0$. It follows
\[
 \int\limits_{Q_T}\sigma\big(u,|\nabla\varphi|\big)\nabla\varphi\cdot\nabla\zeta\,dxdt=0.
\]
\smallskip

\noindent
{\it Proof of\/} (\ref{4.17}). From (H1) and (\ref{4.7}) it follows that there exists a subsequence of $(\nabla\varphi_\varepsilon)_{\varepsilon>0}$ (not relabelled) such that
\[
 \sigma\big(u_\varepsilon,|\nabla\varphi_\varepsilon|\big)\nabla\varphi_\varepsilon\longrightarrow\bs{F}\quad\text{weakly in }\;\big[L^{p'}(Q_T)\big]^n \;\text{as }\;\varepsilon\longrightarrow 0. 
\]
The function $\zeta=\varphi-\varphi_D$ being admissible in (\ref{4.1}), we find
\[
 \int\limits_{Q_T}\bs{F}\cdot\nabla(\varphi-\varphi_D)dxdt=\lim\limits_{\varepsilon\to 0}\int\limits_{Q_T}\sigma\big(u_\varepsilon,|\nabla\varphi_\varepsilon|\big)\nabla\varphi_\varepsilon\cdot\nabla(\varphi-\varphi_D)dxdt=0.
\]
Thus, using (\ref{4.1}) with $\zeta=\varphi_\varepsilon-\varphi_D$, it follows
\begin{eqnarray}\label{4.20}
 \int\limits_{Q_T}\bs{F}\cdot\nabla\varphi\,dxdt&\!\!=\!\!&\int\limits_{Q_T}\bs{F}\cdot\nabla\varphi_D\,dxdt\nonumber\\
 &=&\lim\limits_{\varepsilon\to 0}\int\limits_{Q_T}\sigma\big(u_\varepsilon,|\nabla\varphi_\varepsilon|\big)\nabla\varphi_\varepsilon\cdot \nabla\varphi_D\,dxdt\nonumber\\
 &\ge&\liminf\limits_{\varepsilon\to 0}\int\limits_{Q_T}\sigma\big(u_\varepsilon,|\nabla\varphi_\varepsilon|\big)|\nabla\varphi_\varepsilon|^2dxdt.
\end{eqnarray}

Claim (\ref{4.17}) is now easily seen by the aid of the ``monotonicity trick'' with respect to the dual pairing $\big(\big[L^p(Q_T)\big]^n,\big[L^{p'}(Q_T)\big]^n\big)$. Indeed, let $\bs{G}\in \big[L^p(Q_T)\big]^n$. Using (\ref{3.1}) with $\xi=\bs{G}$, $\eta=\nabla\varphi_\varepsilon$, we find by the aid of (\ref{4.12}), (\ref{4.20}) and Lebesgue's Dominated Convergence Theorem
\[
 \int\limits_{Q_T}\sigma\big(u,|\bs{G}|\big)\bs{G}\cdot(\bs{G}-\nabla\varphi)dxdt\ge\int\limits_{Q_T}\bs{F}\cdot(\bs{G}-\nabla\varphi)dxdt.
\]
Hence, given $\bs{H}\in \big[L^p(Q_T)\big]^n$ and $\lambda>0$, we take $\bs{G}=\nabla\varphi\pm \lambda\bs{H}$, divide by $\lambda>0$ and carry through the passage to the limit $\lambda\to 0$ to obtain
\[
 \int\limits_{Q_T}\sigma\big(u,|\nabla\varphi|\big)\nabla\varphi\cdot\bs{H}\,dxdt=\int\limits_{Q_T}\bs{F}\cdot\bs{H}\, dxdt.
\]
Whence (\ref{4.17}).
\bigskip

\noindent
{\it Proof of\/} (\ref{4.18}). Define
\[
 g_\varepsilon=\big(\sigma\big(u_\varepsilon,|\nabla\varphi_\varepsilon|\big)\nabla\varphi_\varepsilon-\sigma\big(u_\varepsilon,|\nabla\varphi|\big)\nabla\varphi\big)\cdot\nabla(\varphi_\varepsilon-\varphi)\quad\text{a.e. in }\; Q_T.
\]
By the aid of (\ref{4.17}), (\ref{4.16}) and $u_\varepsilon\to u$ a.e. in $Q_T$ (see (\ref{4.14})) one easily obtains
\[
 \lim\limits_{\varepsilon\to 0}\int\limits_{Q_T}g_\varepsilon\, dxdt=0.
\]
By (\ref{3.1}), $g_\varepsilon\ge 0$ a.e. in $Q_T$. Thus

\begin{equation}\label{4.21}
 \lim\limits_{\varepsilon\to 0}\int\limits_{Q_T} g_\varepsilon z\,dxdt=0\quad\forall\; z\in L^\infty(Q_T).
\end{equation}
We next multiply each term of the equation
\[
 \sigma\big(u_\varepsilon,|\nabla\varphi_\varepsilon|\big)|\nabla\varphi_\varepsilon|^2=g_\varepsilon+\sigma\big(u_\varepsilon,|\nabla\varphi_\varepsilon|\big)\nabla\varphi_\varepsilon\cdot\nabla\varphi+\sigma\big(u_\varepsilon,|\nabla\varphi|\big)\nabla\varphi\cdot\nabla(\varphi_\varepsilon-\varphi)
\]
by $z\in L^\infty(Q_T)$ and integrate over $Q_T$. Then (\ref{4.18}) follows from (\ref{4.21}), (\ref{4.17}) and (\ref{4.14}), (\ref{4.12}).\hfill$\square$
\smallskip

\noindent
The next lemma is fundamental to the passage to the limit $\varepsilon\to 0$ in (\ref{4.4}).

\begin{lemma}\label{l4}
 Let be $(\varphi_\varepsilon,u_\varepsilon)$ as in Lemma~$1$, and let be $(\varphi,u)$ as in {\rm (\ref{4.12}), (\ref{4.13})}. Then, for any $z\in L^\infty(Q_T)$,
 \begin{equation}\label{4.22}
  \lim\limits_{\varepsilon\to 0}\int\limits_{Q_T}f_\varepsilon(x,t,u_\varepsilon,\nabla\varphi_\varepsilon)z\,dxdt=\int\limits_{Q_T}f(x,t,u,\nabla\varphi)z\,dxdt.
 \end{equation}
\end{lemma}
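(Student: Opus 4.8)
The plan is to exploit the special structure \eqref{3.2}, namely $f(x,t,u,\xi)=\alpha(x,t,u)\sigma(u,|\xi|)|\xi|^2$, so that the convergence of the Joule-type term \eqref{4.18} obtained in Lemma~3 can be transferred to $f_\varepsilon$. First I would record that, by the definition of $f_\varepsilon$ and \eqref{3.2},
\[
 f_\varepsilon(x,t,u_\varepsilon,\nabla\varphi_\varepsilon)
 =\frac{\alpha(x,t,u_\varepsilon)\,\sigma(u_\varepsilon,|\nabla\varphi_\varepsilon|)|\nabla\varphi_\varepsilon|^2}
        {1+\varepsilon\,\alpha(x,t,u_\varepsilon)\,\sigma(u_\varepsilon,|\nabla\varphi_\varepsilon|)|\nabla\varphi_\varepsilon|^2}
 \le \alpha(x,t,u_\varepsilon)\,\sigma(u_\varepsilon,|\nabla\varphi_\varepsilon|)|\nabla\varphi_\varepsilon|^2,
\]
and I would split the difference to be estimated as
\[
 f_\varepsilon(x,t,u_\varepsilon,\nabla\varphi_\varepsilon)-f(x,t,u,\nabla\varphi)
 = \underbrace{\big(f_\varepsilon-\alpha(\cdot,\cdot,u_\varepsilon)\sigma(u_\varepsilon,|\nabla\varphi_\varepsilon|)|\nabla\varphi_\varepsilon|^2\big)}_{=:A_\varepsilon}
 + \underbrace{\big(\alpha(\cdot,\cdot,u_\varepsilon)-\alpha(\cdot,\cdot,u)\big)\sigma(u_\varepsilon,|\nabla\varphi_\varepsilon|)|\nabla\varphi_\varepsilon|^2}_{=:B_\varepsilon}
 + \underbrace{\alpha(\cdot,\cdot,u)\big(\sigma(u_\varepsilon,|\nabla\varphi_\varepsilon|)|\nabla\varphi_\varepsilon|^2-\sigma(u,|\nabla\varphi|)|\nabla\varphi|^2\big)}_{=:C_\varepsilon}.
\]

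The term $C_\varepsilon$ is the easiest: $\alpha(\cdot,\cdot,u)z\in L^\infty(Q_T)$ (recall $\alpha$ is Carath\'eodory and bounded by $\alpha_0$, and $z\in L^\infty(Q_T)$), so $\int_{Q_T}C_\varepsilon z\,dxdt\to 0$ is exactly the weak-$L^1$ convergence \eqref{4.18} tested against the $L^\infty$ function $\alpha(\cdot,\cdot,u)z$. For $A_\varepsilon$, I would use the pointwise bound
\[
 0\le \alpha\,\sigma|\nabla\varphi_\varepsilon|^2-f_\varepsilon
 =\frac{\varepsilon\,(\alpha\,\sigma|\nabla\varphi_\varepsilon|^2)^2}{1+\varepsilon\,\alpha\,\sigma|\nabla\varphi_\varepsilon|^2}
 \le \varepsilon\,\alpha_0^2\,\big(\sigma(u_\varepsilon,|\nabla\varphi_\varepsilon|)|\nabla\varphi_\varepsilon|^2\big)^2,
\]
which is not obviously integrable; instead I would use $0\le\alpha\sigma|\nabla\varphi_\varepsilon|^2-f_\varepsilon\le\alpha\sigma|\nabla\varphi_\varepsilon|^2$ together with $\alpha\sigma|\nabla\varphi_\varepsilon|^2\le\varepsilon^{-1}$ and observe that on the set where $\alpha\sigma|\nabla\varphi_\varepsilon|^2$ is large the integrand $A_\varepsilon$ is controlled by it, while on the complement the factor $\varepsilon$ wins. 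Concretely, for $\|z\|_{L^\infty}\le M$,
\[
 \Big|\int_{Q_T}A_\varepsilon z\,dxdt\Big|
 \le M\int_{Q_T}\frac{\varepsilon(\alpha\sigma|\nabla\varphi_\varepsilon|^2)^2}{1+\varepsilon\alpha\sigma|\nabla\varphi_\varepsilon|^2}\,dxdt
 \le M\varepsilon\int_{\{\alpha\sigma|\nabla\varphi_\varepsilon|^2\le R\}}(\alpha\sigma|\nabla\varphi_\varepsilon|^2)^2\,dxdt
   + M\int_{\{\alpha\sigma|\nabla\varphi_\varepsilon|^2> R\}}\alpha\sigma|\nabla\varphi_\varepsilon|^2\,dxdt,
\]
and the sequence $\big(\alpha\sigma(u_\varepsilon,|\nabla\varphi_\varepsilon|)|\nabla\varphi_\varepsilon|^2\big)_\varepsilon$ is bounded in $L^1(Q_T)$ (by (H3), or by \eqref{4.7} and (H1)) and, by \eqref{4.18} applied with $z\equiv\alpha_0$ plus equi-integrability coming from the weak-$L^1$ convergence of a nonnegative sequence, is in fact equi-integrable; letting first $\varepsilon\to 0$ (first integral $\to 0$ since it is $\le M\varepsilon R\cdot|Q_T|\cdot\text{const}$) and then $R\to\infty$ (second integral $\to 0$ by equi-integrability, uniformly in $\varepsilon$) gives $\int_{Q_T}A_\varepsilon z\to 0$.

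The term $B_\varepsilon$ is the main obstacle, precisely because $\sigma(u_\varepsilon,|\nabla\varphi_\varepsilon|)|\nabla\varphi_\varepsilon|^2$ converges only weakly in $L^1$ and $\alpha(\cdot,\cdot,u_\varepsilon)-\alpha(\cdot,\cdot,u)\to 0$ only a.e., so one cannot simply pass to the limit in the product. The remedy is to combine a.e.\ convergence with equi-integrability. From \eqref{4.14} and the Carath\'eodory property of $\alpha$ one has $\alpha(\cdot,\cdot,u_\varepsilon)\to\alpha(\cdot,\cdot,u)$ a.e.\ in $Q_T$, with $|\alpha(\cdot,\cdot,u_\varepsilon)-\alpha(\cdot,\cdot,u)|\le\alpha_0$. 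By Egorov's theorem, for every $\eta>0$ there is a measurable set $E_\eta\subset Q_T$ with $|Q_T\setminus E_\eta|<\eta$ on which $\alpha(\cdot,\cdot,u_\varepsilon)\to\alpha(\cdot,\cdot,u)$ uniformly. Splitting $\int_{Q_T}B_\varepsilon z$ over $E_\eta$ and its complement, the integral over $E_\eta$ is bounded by $\big(\sup_{E_\eta}|\alpha(\cdot,\cdot,u_\varepsilon)-\alpha(\cdot,\cdot,u)|\big)\,M\,\|\sigma(u_\varepsilon,|\nabla\varphi_\varepsilon|)|\nabla\varphi_\varepsilon|^2\|_{L^1}\to 0$ as $\varepsilon\to 0$, while the integral over $Q_T\setminus E_\eta$ is bounded by $2\alpha_0 M\int_{Q_T\setminus E_\eta}\sigma(u_\varepsilon,|\nabla\varphi_\varepsilon|)|\nabla\varphi_\varepsilon|^2\,dxdt$, which is small uniformly in $\varepsilon$ when $\eta$ is small, by the equi-integrability of the nonnegative weakly $L^1$-convergent sequence $\big(\sigma(u_\varepsilon,|\nabla\varphi_\varepsilon|)|\nabla\varphi_\varepsilon|^2\big)_\varepsilon$ (a consequence of \eqref{4.18} and the Dunford--Pettis theorem). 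Letting $\varepsilon\to 0$ and then $\eta\to 0$ yields $\int_{Q_T}B_\varepsilon z\,dxdt\to 0$. Adding the three contributions gives \eqref{4.22}.
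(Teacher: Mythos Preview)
Your proof is correct and follows essentially the same approach as the paper: the same three-term decomposition (your $A_\varepsilon,B_\varepsilon,C_\varepsilon$ correspond to the paper's $J_{1,\varepsilon},J_{2,\varepsilon},J_{3,\varepsilon}$), and the same underlying tool---Egorov's theorem combined with the equi-integrability (Dunford--Pettis) of the weakly $L^1$-convergent sequence $\sigma(u_\varepsilon,|\nabla\varphi_\varepsilon|)|\nabla\varphi_\varepsilon|^2$ from \eqref{4.18}. The only minor difference is in the treatment of the regularization error $A_\varepsilon$: the paper factors it as a product of an $L^\infty$-bounded coefficient $z\,\alpha(\cdot,\cdot,u_\varepsilon)\big((1+\varepsilon\alpha\sigma|\nabla\varphi_\varepsilon|^2)^{-1}-1\big)$, which tends to $0$ a.e., times $B_\varepsilon=\sigma(u_\varepsilon,|\nabla\varphi_\varepsilon|)|\nabla\varphi_\varepsilon|^2$, and then invokes once more the ``$L^\infty$-bounded a.e.-convergent $\times$ weakly $L^1$-convergent'' lemma; this is slightly cleaner than your level-set cut-off at height $R$, but both arguments rest on the same equi-integrability.
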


\begin{proof}
 For notational simplicity, we write $(\cdot,\cdot)$ in place of the variables $(x,t)$.
 \par
 The structure condition (\ref{3.2}) and the definition of $f_\varepsilon$ yield
 \[
  \int\limits_{Q_T}\frac{f(\cdot,\cdot,u_\varepsilon,\nabla\varphi_\varepsilon)}{1+\varepsilon f(\cdot,\cdot,u_\varepsilon,\nabla \varphi_\varepsilon)}\,z\,dxdt-\int\limits_{Q_T}f(\cdot,\cdot,u,\nabla\varphi)z\,dxdt=J_{1,\varepsilon}+ J_{2,\varepsilon}+J_{3,\varepsilon}
 \]
where
\begin{eqnarray*}
 J_{1,\varepsilon}&\!\!=\!\!&\int\limits_{Q_T} A_\varepsilon B_\varepsilon dxdt,\\
A_\varepsilon&\!\!=\!\!&z\alpha(\cdot,\cdot,u_\varepsilon)\Big(\frac1{1+\varepsilon\alpha(\cdot,\cdot,u_\varepsilon)\sigma\big(u_\varepsilon,|\nabla\varphi_\varepsilon|\big)|\nabla\varphi_\varepsilon|^2}-1\Big)\\[1mm]
B_\varepsilon&\!\!=\!\!&\sigma\big(u_\varepsilon,|\nabla\varphi_\varepsilon|\big)|\nabla\varphi_\varepsilon|^2,
\end{eqnarray*}
and
\begin{eqnarray*}
 J_{2,\varepsilon}&\!\!=\!\!&\int\limits_{Q_T}z\big(\alpha(\cdot,\cdot,u_\varepsilon)-\alpha(\cdot,\cdot,u)\big)B_\varepsilon\, dxdt,\\
 J_{3,\varepsilon}&\!\!=\!\!&\int\limits_{Q_T}z\alpha(\cdot,\cdot,u)\big(B_\varepsilon-\sigma\big(u,|\nabla\varphi|\big)|\nabla\varphi|^2\big)dxdt.
\end{eqnarray*}
Observing that $0\le\alpha\le\alpha_0=\mathrm{const}$ a.e. in $Q_T$ (see (\ref{3.2})), we find
\begin{equation}\label{4.23}
 |A_\varepsilon|\le\alpha_0\|z\|_{L^\infty}\quad\text{a.e. in }\; Q_T,\quad\forall\;\varepsilon>0.
\end{equation}
On the other hand, from 
\[
 \int\limits_{Q_T}\alpha(\cdot,\cdot,u_\varepsilon)\sigma\big(u_\varepsilon,|\nabla\varphi_\varepsilon|\big)|\nabla\varphi_\varepsilon|^2dxdt\le c\quad\forall\;\varepsilon>0
\]
it follows (by going to a subsequence if necessary) that 
\[
 \varepsilon\alpha(\cdot,\cdot,u_\varepsilon)\sigma\big(u_\varepsilon,|\nabla\varphi_\varepsilon|\big)|\nabla\varphi_\varepsilon|^2\longrightarrow 0\quad\text{a.e. in }\; Q_T \;\text{ as }\;\varepsilon\longrightarrow 0.
\]
Hence,
\begin{equation}\label{4.24}
 A_\varepsilon\longrightarrow 0\quad\text{a.e. in }\; Q_T \;\text{ as }\; \varepsilon\longrightarrow 0.
\end{equation}

From (\ref{4.23}), (\ref{4.24}) and $B_\varepsilon\to\sigma\big(u,|\nabla\varphi|\big)|\nabla\varphi|^2$ weakly in $L^1(Q_T)$ (see (\ref{4.18})) we conclude with the help of Egorov's theorem and the absolute continuity of the intgral that
\[
 J_{1,\varepsilon}=\int\limits_{Q_T}A_\varepsilon B_\varepsilon\, dxdt\longrightarrow 0\quad\text{as }\;\varepsilon\longrightarrow 0
\]
(see, e.g., \cite[p. 54, Prop. 1\,(i)]{12}). Analogously,
\[
 J_{k,\varepsilon}\longrightarrow 0\quad\text{as }\;\varepsilon\longrightarrow 0\quad (k=2,3).
\]
Whence (\ref{4.22}).
\end{proof}

\noindent
$5^\circ$ {\it Proof of\/} (\ref{3.4})--(\ref{3.6}). \, Let $n+2<r<+\infty$ (i.e., setting $q=r'$, then $1<q<\frac{n+2}{n+1}$, $q'=r$, and vice versa).
\par
Let be $z\in W^{1,r}(\Omega)$ and $\psi\in C^1\big([0,T]\big)$, $\psi(T)=0$. We set $v(x,t)=z(x)\psi(t)$ for a.e. $(x,t)\in Q_T$. An integration by parts gives
\begin{eqnarray*}
 \int\limits_0^T\langle u'_\varepsilon,v\rangle_{W^{1,2}}dt\!&\!=\!&\!-\big\langle u_\varepsilon(0),z\big\rangle_{W^{1,2}}\psi(0)-\int\limits_0^T\langle z\psi',u_\varepsilon\rangle_{W^{1,2}}dt\\
 \!&\!=\!&\!-\int\limits_\Omega u_\varepsilon(\cdot,0)z\,dx\,\psi(0)-\int\limits_{Q_T}u_\varepsilon z\psi'dxdt\qquad \text{[by (\ref{2.1})]}
\end{eqnarray*}
(see \cite[p. 54, Prop. 2.5.2 with $p=q=2$, $r=1$ therein]{10}).
\par
With the help of (\ref{4.13}), (\ref{4.14}) and (\ref{4.22}) the passage to the limit $\varepsilon\to 0$ in (\ref{4.4}) (with $v=z\psi$ therein) is easily done. We find
\begin{eqnarray}\label{4.25}
&& -\int\limits_{Q_T} uz\psi'dxdt+\int\limits_{Q_T}\kappa(u)\nabla u\cdot\nabla z\psi\,dxdt+g\int\limits_0^T\int\limits_{\partial\Omega}(u-h)z\psi \,d_xSdt\nonumber\\
&&=\int\limits_\Omega u_0z\,dx\psi(0)+\int\limits_{Q_T} f(x,t,u,\nabla \varphi)z\psi\,dxdt
\end{eqnarray}
(recall $u_\varepsilon(\cdot,0)=u_{0,\varepsilon}\to u_\varepsilon$ strongly in $L^1(\Omega)$). Following line by line the arguments in \cite{24}, from (\ref{4.25}) we deduce the existence of the distributional derivative
\[
 u'\in L^1\big(0,T;\big(W^{1,r}(\Omega)\big)^*\big)
\]
(cf. \cite[p. 154, Prop. A6]{3}), i.e., (\ref{3.4}) holds. Moreover, we have
\begin{equation}\label{4.26}
\int\limits_0^T\big\langle u'(t),z\psi(t)\big\rangle_{W^{1,r}}dt+\big\langle\widetilde{u}(0),z\big\rangle_{W^{1,r}}\psi(0)=-\int\limits_{Q_T} uz\psi'dxdt\qquad \text{ [by (\ref{2.1})]}, 
\end{equation}
where $\widetilde{u}\in C\big([0,T];\big(W^{1,r}(\Omega)\big)^*\big)$ is as in Section 2 (see \cite[p. 54, Prop. 2.5.2 with $p=1$, $q=+\infty$, $r=1$ therein]{10}). We insert (\ref{4.26}) into (\ref{4.25}) and obtain
\begin{eqnarray}\label{4.27}
 &&\int\limits_0^T\big\langle u'(t),z\psi(t)\big\rangle_{W^{1,r}}dt + \big\langle\widetilde{u}(0),z\big\rangle_{W^{1,r}}\psi(0)\nonumber\\
 &&+\int\limits_{Q_T}\kappa(u)\nabla u\cdot\nabla z\psi\,dxdt+g\int\limits_0^T\int\limits_{\partial\Omega}(u-h)z\psi\, d_xSdt\nonumber\\
 &&= \int\limits_\Omega u_0z\,dx\,\psi(0)+\int\limits_{Q_T} f(x,t,u,\nabla\varphi)z\psi\,dxdt
\end{eqnarray}
for all $z\in W^{1,r}(\Omega)$ and all $\psi\in C^1\big([0,T]\big)$, $\psi(T)=0$. 
\par
To prove (\ref{3.5}), we take $\psi\in C_c^1\big(\,]\,0,T\,[\,\big)$ in (\ref{4.27}). A routine argument yields
\begin{eqnarray}\label{4.28}
 &&\big\langle u'(t),z\big\rangle_{W^{1,r}}+\int\limits_\Omega\kappa(u)\nabla u\cdot\nabla z\,dx+g\int\limits_{\partial\Omega}(u-h)z\,d_xS\nonumber\\
 &&=\int\limits_\Omega f(x,t,u,\nabla\varphi)z\,dx
\end{eqnarray}
for all $z\in W^{1,r}(\Omega)$ and a.e. $t\in [0,T]$, where the null set in $[0,T]$ of those $t$ for which (\ref{4.28}) fails, does not depend on $z$. Now, given $v\in L^\infty\big(0,T;W^{1,s}(\Omega)\big)$ ($n+2<s<+\infty$), we insert $z=v(\cdot,t)$ into (\ref{4.28}) (with $r=s$ therein) and integrate over the interval $[0,T]$. Whence (\ref{3.5}).
\par
Equ. (\ref{3.6}) in $\big(W^{1,s}(\Omega)\big)^*$ is now easily seen. Indeed, let $z\in W^{1,s}(\Omega)$ ($n+2<s<+\infty$), and let $\psi\in C^1\big([0,T]\big)$, $\psi(0)=1$ and $\psi(T)=0$. We multiply (\ref{4.28}) by $\psi(t)$ and integrate over $[0,T]$. Combining (\ref{4.27}) and (\ref{4.28}), we obtain
\[
 \big\langle \widetilde{u}(0),z\big\rangle_{W^{1,s}}=\int\limits_\Omega u_0z\,dx,
\]
i.e., (\ref{3.6}) holds (cf. (\ref{2.11}) with $q'=s$ therein).
\par
The proof of the theorem is complete. 

\end{document}